\documentclass[11pt]{amsart}
\usepackage{latexsym}
\usepackage{ amssymb }
\usepackage{amsmath}
\usepackage{amsfonts}
\usepackage{amssymb}
\usepackage[all]{xy}
\usepackage{pgf,tikz}
\usepackage{caption}
\usepackage{verbatim}

\usepackage[draft]{todonotes}   

\usetikzlibrary{arrows}
\usetikzlibrary{decorations.pathmorphing}
\addtolength{\topmargin}{-0.8cm}
\addtolength{\textheight}{1.6cm}
\addtolength{\evensidemargin}{-1cm}
\addtolength{\oddsidemargin}{-1cm}
\addtolength{\textwidth}{2cm}
\theoremstyle{plain}



\newtheorem{theorem}{Theorem}[section]
\newtheorem{lemma}[theorem]{Lemma}
\newtheorem{definition-theorem}[theorem]{Definition-Theorem}
\newtheorem{proposition}[theorem]{Proposition}
\newtheorem{corollary}[theorem]{Corollary}
\newtheorem{definition}[theorem]{Definition}
\newtheorem{conjecture}[theorem]{Conjecture}

\newtheorem*{phenomenon*}{Phenomenon}
\newtheorem*{theorem*}{Theorem}

\theoremstyle{definition}
\newtheorem{example}[theorem]{Example}
\newtheorem{remark}[theorem]{Remark}

\newcommand \bth[1] { \begin{theorem}\label{t#1} }
\newcommand \ble[1] { \begin{lemma}\label{l#1} }

\newcommand \bpr[1] { \begin{proposition}\label{p#1} }
\newcommand \bco[1] { \begin{corollary}\label{c#1} }
\newcommand \bde[1] { \begin{definition}\label{d#1}\rm }
\newcommand \bex[1] { \begin{example}\label{e#1}\rm }
\newcommand \bre[1] { \begin{remark}\label{r#1}\rm }
\newcommand \bcj[1] { \begin{conjecture}\label{j#1}\rm }

\renewcommand {\eth} { \end{theorem} }
\newcommand {\ele} { \end{lemma} }

\newcommand {\epr} { \end{proposition} }
\newcommand {\eco} { \end{corollary} }
\newcommand {\ede} { \end{definition} }
\newcommand {\eex} { \end{example} }
\newcommand {\ere} { \end{remark} }
\newcommand {\ecj} { \end{conjecture} }

\newcommand {\enota} { \end{notation} }















\DeclareMathOperator \green { {\mathrm{green}} }

\DeclareMathOperator \Supp { {\mathrm{Supp}} }

\newcommand \E {\mathbb{E}}

\begin{document}

\title{Building maximal green sequences via component preserving mutations}
\author[Eric Bucher]{Eric Bucher}
\address{
Department of Mathematics \\
Xavier University \\
Cincinnati, OH 45207
U.S.A.
}
\email{buchere1@xavier.edu}

\author[John Machacek]{John Machacek}
\address{
Department of Mathematics and Statistics\\ York  University\\ Toronto, Ontario M3J 1P3\\ CANADA
}
\email{machacek@yorku.ca}

\author[Evan Runburg]{Evan Runburg}
\address{
Department of Mathematics \\
Michigan State University \\
East Lansing, MI 48824
U.S.A.
}
\email{runburge@msu.edu}

\author[Abe Yeck]{Abe Yeck}
\address{
Department of Mathematics \\
Michigan State University \\
East Lansing, MI 48824
U.S.A.
}
\email{yeckmarc@msu.edu}

\author[Ethan Zewde]{Ethan Zewde}
\address{
Department of Mathematics \\
Michigan State University \\
East Lansing, MI 48824
U.S.A.
}
\email{zewdeeth@msu.edu}

\maketitle

\begin{abstract}
We introduce a new method for producing both maximal green and reddening sequences of quivers.
The method, called component preserving mutations, generalizes the notion of direct sums of quivers and can be used as a tool to both recover known reddening sequences as well as find reddening sequences that were previously unknown.
We use the method to produce and recover maximal green sequences for many bipartite recurrent quivers that show up in the study of periodicity of $T$-systems and $Y$-systems.
Additionally, we show how our method relates to the dominance phenomenon recently considered by Reading.
Given a maximal green sequence produced by our method, this relation to dominance gives a maximal green sequence for infinitely many other quivers.
Other applications of this new methodology are explored including computing of quantum dilogarithm identities and determining minimal length maximal green sequences.
\end{abstract}

 \tableofcontents


\section{Introduction}
Quiver mutation is the fundamental combinatorial process which determines the generators and relations in Fomin and Zelevinsky's cluster algebras~\cite{FZ}. Cluster algebras have arisen in a variety of mathematical areas including Poisson geometry, Teichm\"uller theory, applications to mathematical physics, representation theory, and more. 
Quiver mutation is a local procedure that alters a quiver and produces a new quiver.
Understanding how a quiver mutates is essential to understanding the corresponding cluster algebra.
We will consider the problem of explicitly constructing sequences of mutations with some special properties.

\subsection{Some history of the problem}
A maximal green sequence, and more generally a reddening sequence, is a special sequence of quiver mutations related to quantum dilogarithm identities which was introduced by Keller~\cite{KellerQdilog, KellerSquare}.
Such sequences of mutations do not exist for all quivers and determining their existence or nonexistence is an important problem.
For a good introduction to the study of maximal green and reddening sequences see the work of Br\"ustle, Dupont, and P\'erotin~\cite{BDP}. In addition to the role they play in quantum dilogrithm identities, these sequences of mutations are a key tool utilized in other cluster algebra areas.
For example, the existence of a maximal green sequence allows one to categorify the associated cluster algebras following the work of Amiot~\cite{Amiot}. 
Also the existence of a maximal green sequence is a condition which plays a role in the powerful results of Gross, Hacking, Keel, and Kontsevich~\cite{GHKK} regarding canonical bases.
These results use the notion of scattering diagrams to prove the positivity conjecture for a large class of cluster algebras.
Additionally the existence of a reddening sequence is thought to be related to when a cluster algebra equals its upper cluster algebra~\cite{Mills,BanffRed}.
Maximal green sequences are also related to representation theory~\cite{BDP} and in the computation of BPS states in physics~\cite{alim2014}.
Our notion of a \emph{component preserving} sequence of mutations, which will be defined in Section~\ref{sec:CPdef}, is closely related to what has been called a \emph{factorized} sequence of mutations~\cite{Cecotti2011,DelZotto2011,PhysicsA} in the physics literature where particular attention has been paid to $ADE$ Dynkin quivers.
Our definition is more general which allows for use with both maximal green sequences and reddening sequences.
Being able to work with reddening sequences is desirable since the existence of a reddening sequence is mutation invariant while the existence of a maximal green sequence is not~\cite{MullerEJC}.
Hence, the existence of a reddening sequences ends up being a invariant of the cluster algebra as opposed to just the quiver.

In general it can be a difficult problem to determine if a quiver admits a maximal green or reddening sequence. These sequences have been found or shown to not exist in the case of finite mutation type quivers by the work of a variety of authors ~\cite{MillsMGS, Bucher, BucherMills, alim2014, Seven} leaving the question of existence only to quivers that are not of finite mutation type. This makes finding these sequences particularly difficult as the exchange graph for such quivers can be very complicated. Additionally there are branches of the exchange graph, in which no amount of mutations can lead to a maximal green sequence; meaning random computer generated mutations are extremely unlikely to produce maximal green sequences for these quivers. 
In addition to finite mutation type quivers, headway has been made on specific families of quivers such as minimal mutation-infinite quivers~\cite{LawsonMills} and quivers which are associated to reduced plabic graphs~\cite{FS}.
This gives us many quivers for which we know reddening or maximal green sequences for. This provides a foundation to produce reddening and maximal green sequences for quivers which are built out of these.

When a quiver does admit a maximal green or reddening sequence it is desirable to have an explicit and well understood construction of the sequence.
Having the specific sequence of mutations and understanding the corresponding $c$-vectors gives us a product of quantum dilogarithms~\cite{KellerQdilog, KellerSquare} and an expression for the Donaldson-Thomas transformation of Kontsevich and Soibelman~\cite{KS}. The method which we present in this paper allows one to explicitly produce the sequence so that it can be used to for the corresponding computation.

Work by Garver and Musiker~\cite{GM}, as inspired by \cite{Amiot} and \cite{alim2014}, and later by Cao and Li~\cite{CL} looked at using what has been called \emph{direct sums} of quivers to produce maximal green and reddening sequences when the induced subquivers being summed exhibit the appropriate sequences. This heuristic approach of building large sequences of mutations from subquivers is essentially the direction we want to expand upon in this paper. Component preserving mutations are a way of taking known maximal green and reddening sequences for induced subquivers (which we will call components) and combining them together to obtain a maximal green or reddening sequence for the whole quiver.
The direct sum procedure becomes a particular instance of the theory of component preserving mutations. 

The methodology presented has an assortment of applications. It can be used to produce maximal green sequences for bipartite recurrent quivers, recover known results regarding admissible source mutation sequences for acyclic quivers, and show that the existence of a maximal green or reddening sequence is an example of a certain dominance phenomena in the sense of recent work by Reading~\cite{Reading}.

\subsection{Summary of the methodology }\label{sec:methodology}
The goal of this paper is to develop a methodology which allows one to use reddening sequences of subquivers of a given quiver to build reddening sequences for larger quivers. 
Since mutation is a local procedure, only affecting neighboring vertices, this is a natural approach. 
Moreover, it is known that when a quiver has a maximal green or reddening sequence, then the same is true for any induced subquiver~\cite{MullerEJC}.
Hence, developing a method to produce a maximal green or reddening sequence from induced subquivers is a type of converse to this fact.

The method starts by breaking the quiver, $Q$, into subquivers which we call components; each of which has a known reddening sequence. The components will partition the vertices of the quiver, giving a \emph{partitioned quiver} $(Q, \pi)$, where $\pi := \pi_1/\pi_2/\dots/\pi_{ \ell}$ is a partition of the vertices of $Q$. We label the components $Q_i$.  We start with the framed quiver, where we partition all of the frozen and mutable pairs into the same parts. We call this quiver the \emph{framed partition quiver} $(\widehat{Q},\widehat{\pi})$. We then try to \emph{shuffle} the respective reddening sequences together to see if they form a reddening sequence for the entire quiver. 
It is not the case that one can always find a shuffle which works on the entire quiver.
To guarantee that they do build a reddening sequence, we must check that at each mutation step the mutation vertex satisfies the component preserving condition which will be given in Definition~\ref{def:CP}. 
If this condition holds the main result of this paper shows that you have constructed a reddening sequence for the larger quiver. 

\begin{theorem*}[Main Result]
Let $(\widehat{Q},\widehat{\pi})$ be a framed partition quiver where for each $\widehat{Q_i}$ we have a reddening sequence $\sigma_i$. Let $\tau$ be a shuffle of the $\sigma_i$ such that at every mutation step of the sequence $\tau$ we have that $k$ is component preserving with respect to $\pi$. Then $\tau$ is a reddening sequence for $\widehat{Q}$.
\end{theorem*}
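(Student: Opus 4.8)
The plan is to read reddening off the extended exchange data and to track that data through $\tau$ by an induction that decouples it along the partition. Recall that a sequence is reddening for $\widehat{Q}$ exactly when, after performing it, the $C$-matrix (the block of the extended exchange matrix recording arrows from mutable to frozen vertices) equals $-P$ for a permutation matrix $P$; equivalently, every $c$-vector is some $-e_{w'}$. Because the statement concerns reddening rather than maximal greenness, I do not need to certify the color of the mutation vertex at intermediate steps, only the terminal configuration. For $0\le t\le|\tau|$ let $\widehat{Q}^{(t)}$ denote the quiver after the first $t$ steps of $\tau$, and let $n_i(t)$ count how many of those steps lie in the component $\pi_i$. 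Since $\tau$ is a shuffle of the $\sigma_i$, the subsequence of the first $t$ steps that lies in $\pi_i$ is precisely the length-$n_i(t)$ prefix of $\sigma_i$.

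First I would propose the inductive invariant, asserting that $\widehat{Q}^{(t)}$ decomposes along $\pi$ exactly as dictated by the individual sequences: \textbf{(i)} the subquiver of $\widehat{Q}^{(t)}$ induced on $\widehat{\pi}_i=\pi_i\cup\pi_i'$ coincides with the quiver obtained from $\widehat{Q_i}$ after the first $n_i(t)$ steps of $\sigma_i$; and \textbf{(ii)} no mutable vertex of $\pi_i$ has an arrow to a frozen vertex outside $\pi_i'$, i.e.\ every $c$-vector of a vertex of $\pi_i$ is supported on $\Zset^{\pi_i'}$. Clause \textbf{(i)} records that each component sees only its own sequence, while clause \textbf{(ii)} is the statement that the frozen (hence the $c$-vector) data never leaks between components; it is \textbf{(ii)} that will ultimately force the terminal $C$-matrix to be block diagonal.

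The induction step is the heart of the argument and is exactly where Definition~\ref{def:CP} is used. Assume the invariant at time $t$ and mutate at the component-preserving vertex $k\in\pi_a$. Verifying \textbf{(i)} is the easier half: mutation only alters arrows that reverse at $k$ or complete a two-path through $k$, so inside $\widehat{\pi}_a$ the local picture seen by $\mu_k$ agrees, by \textbf{(i)}, with the picture seen by the corresponding step of $\sigma_a$ acting on $\widehat{Q_a}$; component preservation is what guarantees in addition that $\mu_k$ creates no new arrow between two vertices that both avoid $\pi_a$, so the induced subquiver on every other $\widehat{\pi}_b$ is untouched and its counter $n_b$ is unchanged. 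Maintaining \textbf{(ii)} is the crux. Under the $C$-matrix mutation rule a $c$-vector $c_v$ is updated by adding an integer multiple of $c_k$, and by sign-coherence the multiple is $[\,B_{kv}\,]_{+}$ or $[\,B_{kv}\,]_{-}$ according to the color of $k$; thus $c_v$ for $v\in\pi_b$, $b\ne a$, can only acquire support in $\pi_a'$ if $k$ carries a between-component arrow oriented incompatibly with its color. I would show that this is exactly the configuration that the component-preserving hypothesis forbids, so that the cross-component contribution vanishes and \textbf{(ii)} persists.

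Evaluating the invariant at $t=|\tau|$ finishes the proof. Each $n_i$ then equals the length of $\sigma_i$, so by \textbf{(i)} the subquiver on each $\widehat{\pi}_i$ is the terminal quiver of the reddening sequence $\sigma_i$, whose $c$-vectors are all negative and comprise $-P_i$ for a permutation $P_i$; by \textbf{(ii)} these account for the entire $c$-vector of each mutable vertex, with no foreign frozen arrows. Hence the $C$-matrix of $\widehat{Q}^{(|\tau|)}$ is $-\bigoplus_i P_i$, a negated permutation matrix, and $\tau$ is reddening for $\widehat{Q}$. The step I expect to fight hardest with is the preservation of \textbf{(ii)}: one must extract from Definition~\ref{def:CP} precisely the orientation control on between-component arrows that kills the cross-component term in the $c$-vector mutation formula, and confirm that this control is in force at every vertex the shuffle asks us to mutate.
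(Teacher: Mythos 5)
Your proposal is correct and follows essentially the same route as the paper: your clause \textbf{(i)} is the iterated form of the paper's Lemma~\ref{mu} (component-preserving mutation commutes with restriction to each block), and your clause \textbf{(ii)} is the paper's Remark~\ref{support} on $c$-vector support staying inside a component, which together with sign-coherence gives the paper's Lemma~\ref{lemsign} and the conclusion. The only cosmetic difference is that you make the $C$-matrix recursion and the induction explicit where the paper argues at the quiver level and cites sign-coherence.
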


This main result is proven in Section~\ref{sec:CPdef} where is it restated in Theorem~\ref{cor:CPred}.
This approach gives one a starting point as to where to search for reddening sequences given an arbitrary quiver. 
First break the quiver into subquivers you are comfortable constructing reddening sequences for; and then attempt to shuffle these sequences. 
This approach may initially seem overwhelming as you could consider any partition of the quiver into subquivers along with any shuffle of reddening sequences.
However, as we explored utilizing this technique what we realized was that there are often very natural shuffles and partitions present in many commonly studied quivers.
For instance, this concept generalizes the idea of direct sums of quivers where the shuffle takes the particular simple form of concatenation.
Additionally, it can be used to give short and effective constructions of maximal green sequences for bipartite recurrent quivers, and many more examples where some \emph{well behaved properties} of a specific quiver provides the recipe for how to shuffle and partition the vertices.

This article is structured in the following way. Section~\ref{sec:prelim} will give some preliminaries for quiver mutation and the study of reddening sequences. In Section~\ref{sec:CPdef} we will present the main results of the paper outlining how the component preserving procedure can produce new maximal green and reddening sequences from induced subquivers. Within Section~\ref{sec:CPdef} we present a large amount of examples to try and illustrate how this procedure works. In the sections following this we look at some applications of this procedure to produce interesting and new results. Results related to dominance phenomena are in Section~\ref{sec:dom} and bipartite recurrent quivers are considered in Section~\ref{sec:bipartite}. In Section~\ref{sec:other} we consider the computation of Donaldson-Thomas invariants and minimal length maximal green sequences.
We have added a large amount of examples to the article in an effort to try and give the reader an opportunity to become familiar with how one uses this method in a hands-on manner. This is intentional, as from exploring these methods it appears that many reddening sequences are built in this manner from small set of ``basic reddening sequences.''
The intuition of the authors is that there may be a way to describe a list of ``basic reddening sequences'' from which any reddening sequence can be built. It is our hope that this paper is the first step in building the concrete theory behind this intuition.

\subsection{Acknowledgements}
The authors would like to thank the anonymous referees for this paper. Their insightful feedback has helped strengthen the paper.

\section{Preliminaries}\label{sec:prelim}
A \emph{quiver} Q is a directed multigraph with vertex set $V(Q)$ and whose edge set $E(Q)$ contains no loops or 2-cycles.
Elements of $E(Q)$ will typically be referred to as \emph{arrows}.
An \emph{ice quiver} is a pair $(Q,F)$ where $Q$ is a quiver, $F \subseteq V(Q)$, and $Q$ contains no arrows between elements of $F$.
Vertices in $F$ are called \emph{frozen} while vertices in $V(Q) \setminus F$ are called \emph{mutable}.
The \emph{framed quiver} associated to a quiver $Q$, denoted $\widehat{Q}$, is the ice quiver whose vertex set, edge set, and set of frozen vertices are the following:
\[ V(\widehat{Q} ) :=  V(Q) \sqcup \{ i' \ | \ i \in V(Q) \} \]
\[ E(\widehat{Q} ) : = E(Q) \sqcup \{ i \rightarrow i' \ | \ i \in V(Q) \} \]
\[F = \{i' | i \in V(Q)\}\]
The framed quiver corresponds to considering a cluster algebra with principal coefficients. 

Given an ice quiver $(Q,F)$ for any mutable vertex $i$, \emph{mutation} at the vertex $i$ produces a new quiver denoted by $(\mu_i(Q),F)$ obtained from $Q$ by doing the following:
\begin{enumerate}
\item[(1)] For each pair of arrows $j \to i$, $i \to k$ such that not both $i$ and $j$ are frozen add an arrow $j \to k$.
\item[(2)] Reverse all arrows incident on $i$.
\item[(3)] Delete a maximal collection of disjoint $2$-cycles.
\end{enumerate}
Mutation is not allowed at any frozen vertex.
Since mutation does not change the set of frozen vertices we will often abbreviate an ice quiver $(Q,F)$ by $Q$ and  $(\mu_i(Q),F)$ by $\mu_i(Q)$ where the set of frozen vertices is understood from context.
We will be primarily focused on framed quivers and quivers which are obtained from a framed quiver by a sequence of mutations.
In fact, whenever we have an ice quiver with a nonempty set of frozen vertices we will assume it is obtainable from a framed quiver by some sequence of mutations.
So, the set of frozen vertices will be of a very particular form.

A mutable vertex is \emph{green} if it there are no incident incoming arrows from frozen vertices.
Similarly, a mutable vertex is \emph{red} if there are no incident outgoing arrows to frozen vertices.  
If we start with an initial quiver $Q$ and perform mutations at mutable vertices of the framed quiver $\widehat{Q}$, then any mutable vertex will always be either green or red.
The result is known as \emph{sign-coherence} and was established by Derksen, Weyman,and Zelevinsky~\cite{DWZ}.
For each vertex $i$ in a quiver obtained from $\widehat{Q}$ by some sequence of mutations, the corresponding \emph{$c$-vector} is defined by its $j$th entry being the number of arrows from $i$ to $j'$ (with arrows $j'$ to $i$ counting as negative).
In these terms sign-coherence says a $c$-vector's entries are either nonnegative or nonpositive.
Notice also that all vertices are initially green when starting with $\widehat{Q}$.
Keller~\cite{KellerQdilog, KellerSquare} has introduced the following types of sequences of mutations which will be our main interest.
A sequence mutations is called a \emph{reddening sequence} if after preforming this sequence of mutations all mutable vertices are red.
A \emph{maximal green sequence} is a reddening sequence where each mutation occurs at a green vertex.
When a sequence of mutations is a reddening sequence we may say it is a reddening sequence for either $Q$ or $\widehat{Q}$.
In terms of being a reddening sequence or not, the quiver $Q$ and the framed quiver $\widehat{Q}$ are equivalent data.

We may write a maximal green or reddening sequence as either a sequence of vertices (read from left to right) or as a composition of mutations (read from right to left as is usual with composition of functions).
For a quiver $Q$ we will let $\green(Q)$ denote the set of maximal green sequences for $Q$.
If we consider the quiver $Q = 1 \to 2$ there are exactly two maximal green sequences and we can record them either as
\[\green(Q) = \{(1,2), (2,1,2)\}\]
in sequence of vertices notation or as
\[\green(Q) = \{\mu_2 \mu_1, \mu_2 \mu_1 \mu_2\}\]
in composition notation.

We will need to modify and combine sequences of vertices when producing maximal green and reddening sequences. This is done by \emph{shuffling} mutation sequences together.

\begin{definition}
A shuffle of two sequences $(a_1,a_2,\dots, a_k)$ and $(b_1,b_2,\dots, b_{\ell})$ is any sequence whose entries are exactly the elements of $\{a_1,a_2,\dots, a_k\} \cup \{b_1,b_2,\dots, b_{\ell}\}$ (considered as a multiset) with the relative orders of $(a_1,a_2,\dots, a_k)$ and $(b_1,b_2,\dots, b_{\ell})$ are preserved. 
\end{definition}

For example there are $6$ shuffles of the sequences $(1,2)$ and $(a,b)$. They are the sequences $(1,2,a,b), \ (1,a,2,b), \ (1,a,b,2), \ (a,1,2,b), \ (a,1,b,2),$ and $(a,b,1,2)$.
In the next section we will define component preserving mutations and show how by checking for the component preserving property you can create shuffles of reddening sequences on induced subquivers whose result is a reddening sequence for a larger quiver.

\section{Component preserving mutations}\label{sec:CPdef}
We start by establishing some basic definitions and notation of what we mean by a component of the quiver.

\begin{definition}
Let $Q$ be an ice quiver with vertex set $V$. Then let $\pi = \pi_1/\pi_2/ \cdots /\pi_{\ell}$ be a set partition of $V$. Then let $Q_i$ be the induced subquiver of $Q$ obtained by deleting every vertex $v \not\in \pi_i$. We will call the $Q_i$ the \textbf{components} of $Q$ and the pair $(Q,\pi)$ a \textbf{partitioned quiver}.
\end{definition}

\begin{definition}
When $(Q, \pi)$ is a partitioned quiver with  $\pi = \pi_1/\pi_2/ \cdots /\pi_{\ell}$, we will define $\widehat{\pi}$ as the partition of $\widehat{V}$ where each $\widehat{\pi_i} = \{v, \widehat{v} \ | \ v \in \pi_i\}$.
Then $(\widehat{Q}, \widehat{\pi})$ will be called a \textbf{partitioned ice quiver}.
\end{definition}

\begin{remark}
In other words, for each mutable vertex $v$, the frozen copy of a vertex, $\widehat{v}$, lies in the same component as $v$. It is straight forward to see that $\widehat{(Q_i)} = (\widehat{Q})_i$.
\end{remark}

\begin{definition}
Mutation of a partitioned ice quiver is defined as the following:
\[\mu_k((Q,\pi)):=(\mu_k(Q),\pi).\]
\end{definition}

\begin{definition}
Let $(Q,\pi)$ be a partitioned ice quiver. A \textbf{bridging arrow} $a\rightarrow b$ is any arrow in $Q$ in which $a$ and $b$ are in different components. 
\end{definition}

Now we can talk about the definition that is crucial to all the results in the rest of the paper. This is the notion of component preserving vertices and component preserving mutations.

\begin{definition}
A vertex $k\in Q_i$ is \textbf{component preserving} with respect to $\pi$ when one of the following occurs:
\begin{itemize}
\item If $\exists \ k\rightarrow j'$ for a frozen vertex $j'$, then $\forall \ a \rightarrow k$ we have $a\in V(Q_i)$; or
\item If $\exists \ j'\rightarrow k$ for a frozen vertex $j'$, then $\forall \ k \rightarrow a$ we have $a\in V(Q_i)$.
\end{itemize}
\label{def:CP}
\end{definition}

\begin{figure}
    \centering
    \begin{tikzpicture}
    \draw (0,0) ellipse (1.5cm and 3cm);
    \node[draw, circle] (k) at (0,0.5) {$k$};
    \node[draw, rectangle] (f) at (0,2) {$j'$};
    \node[draw, circle] (a) at (-0.7,-1.25) {\phantom{$k$}};
    \node[draw, circle] (b) at (0.7,-1.25) {\phantom{$k$}};
    \node[draw, circle] (c) at (2.5,2) {\phantom{$k$}};
    \node[draw, circle] (d) at (2.5,0) {\phantom{$k$}};
    \node[draw, circle] (e) at (2.5,-2) {\phantom{$k$}};
    \draw[-{latex}] (k) to (f);
    \draw[-{latex}] (k) to (a);
    \draw[-{latex}] (b) to (k);
    \draw[-{latex}] (k) to (c);
    \draw[-{latex}] (k) to (d);
    \draw[-{latex}] (k) to (e);
    \node at (0, -3.5) {$Q_i$};
    
    \draw (8,0) ellipse (1.5cm and 3cm);
    \node[draw, circle] (k) at (8,0.5) {$k$};
    \node[draw, rectangle] (f) at (8,2) {$j'$};
    \node[draw, circle] (a) at (7.3,-1.25) {\phantom{$k$}};
    \node[draw, circle] (b) at (8.7,-1.25) {\phantom{$k$}};
    \node[draw, circle] (c) at (10.5,2) {\phantom{$k$}};
    \node[draw, circle] (d) at (10.5,0) {\phantom{$k$}};
    \node[draw, circle] (e) at (10.5,-2) {\phantom{$k$}};
    \draw[-{latex}] (f) to (k);
    \draw[-{latex}] (k) to (a);
    \draw[-{latex}] (b) to (k);
    \draw[-{latex}] (c) to (k);
    \draw[-{latex}] (d) to (k);
    \draw[-{latex}] (e) to (k);
    \node at (8, -3.5) {$Q_i$};
    \end{tikzpicture}
    \caption{An illustration of a component preserving vertex $k \in Q_i$ on the left with arrow $k \to j'$ and on the right with arrow $j' \to k$.}
    \label{fig:my_label}
\end{figure}
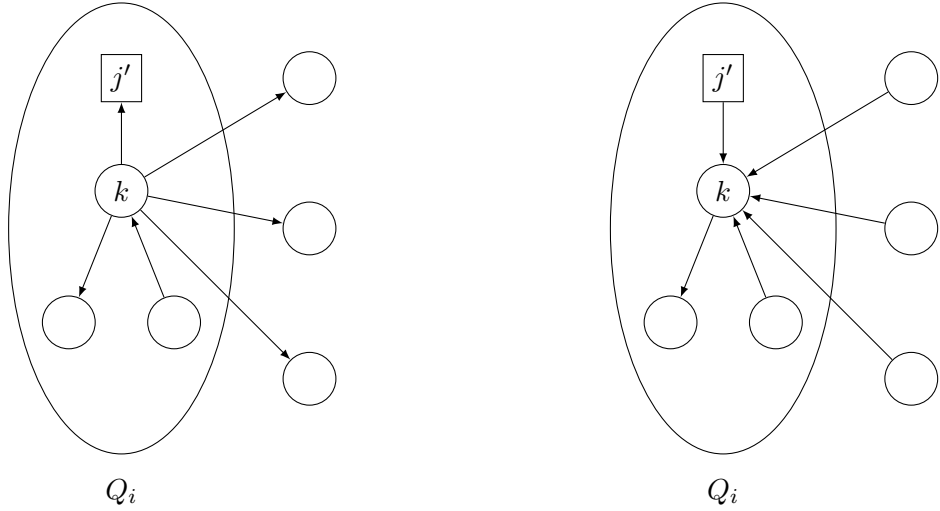

\begin{remark}
Another way of thinking about component preserving mutations is in terms of sign-coherence. One can think of a component preserving vertex, $k$, as a vertex where \emph{freezing} each mutable vertex outside of its component results in an ice quiver in which the extended exchange matrix is still sign-coherent with respect to this larger set of frozen vertices. 
In this way one can think of component preserving mutations as being a type of \emph{locally sign-coherent} mutation. 
\end{remark}

\begin{remark}\label{support} Another observation to make is that whenever one starts from a framed quiver, mutation at component preserving vertices does not result in creating bridging arrows that involve frozen vertices. This means that any quiver which is the result of a sequence of component preserving mutations starting from a framed quiver has the \textbf{support of all of its $c$-vectors contained entirely within a component}. In terms of the quiver, this means that the sequence of component preserving mutations results in a quiver in which all arrows involving frozen vertices are between mutable vertices and frozen vertices within the same component. 
\end{remark}

The choice of terminology is because performing mutation at a component preserving vertex, $k$, does not affect $Q_i$ unless $k\in \pi_i$. We will prove this fact and then show how one can use this fact to shuffle maximal green sequences together if at every mutation step you mutated at a component preserving vertex.

\subsection{Preservation proof}

Now that we have the language to talk about components of the quiver, we want to set up a condition on a vertex, $k$, which forces $\mu_k$ to only affect the component which contains $k$ and none of the other induced subquivers. This is exactly the property that component preserving vertices have.

\begin{lemma}\label{mu}
Let $(Q,\pi)$ be a partitioned ice quiver. If $k$ is a component preserving vertex then $\mu_k(Q)_i=\mu_k(Q_i) \ \forall \ 1 \leq i \leq \ell$. 
\end{lemma}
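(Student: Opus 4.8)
The plan is to analyze how mutation at a component-preserving vertex $k \in \pi_i$ affects arrows both within and across components, and show that the only changes occur inside $Q_i$. Recall mutation has three steps: (1) for composable paths $j \to k \to m$, add an arrow $j \to m$; (2) reverse all arrows at $k$; (3) cancel $2$-cycles. Step (2) and step (3) only touch arrows incident to $k$ or arrows created in step (1), so the real content is controlling step (1). I would fix an index $r$ and compare $\mu_k(Q)_r$ with $\mu_k(Q_r)$. If $r \neq i$, then $k \notin \pi_r$, so $\mu_k(Q_r) = Q_r$ since mutation is local to $k$ and its neighbors; the task is then to check that $\mu_k(Q)$ introduces no new arrows, no reversals, and no cancellations \emph{within} $\pi_r$. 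If $r = i$, I must check the reverse containment, namely that every arrow-change dictated by the full quiver $Q$ already happens inside $Q_i$.

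\textbf{The key step} is verifying that step (1) creates no spurious arrow with both endpoints outside $\pi_i$, and more subtly, no new arrow $j \to m$ with $j,m \in \pi_r$ for $r \neq i$ passing through $k$. A new arrow $j \to m$ requires $j \to k$ and $k \to m$ in $Q$. The component-preserving condition on $k$ is exactly what rules this out: by Definition~\ref{def:CP}, since $k$ is component preserving, either all arrows \emph{into} $k$ come from within $Q_i$, or all arrows \emph{out of} $k$ go to within $Q_i$. Here is where I would invoke Remark~\ref{support}: because we work inside a quiver obtained from a framed quiver by component-preserving mutations, $k$ itself has an incident frozen arrow (to or from some $j'$ in its own component), which activates one of the two bullets of the definition. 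In the first case every $a \to k$ has $a \in V(Q_i)$; in the second case every $k \to a$ has $a \in V(Q_i)$. Either way, at least one of the two endpoints of any composable path $j \to k \to m$ must lie in $\pi_i$, so the new arrow $j \to m$ can only be created when one endpoint is in $Q_i$. In particular no arrow is ever added between two vertices both lying in some $\pi_r$ with $r \neq i$, which gives $\mu_k(Q)_r = Q_r = \mu_k(Q_r)$.

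\textbf{For the case $r = i$}, I would argue that restricting to $Q_i$ loses none of the relevant data: every arrow at $k$ relevant to steps (1)--(3) has its $Q_i$-side endpoint already present in $Q_i$, and step (1) only pairs an in-arrow with an out-arrow at $k$. By the component-preserving condition, whichever family of arrows at $k$ is ``unrestricted'' (the out-arrows in the first case, in-arrows in the second) is paired only against the ``restricted'' family, which lies entirely in $Q_i$; hence every newly created arrow touching $\pi_i$ is already produced by mutating $Q_i$ alone, and conversely $\mu_k(Q_i)$ creates nothing that $\mu_k(Q)$ does not. A small point to handle carefully is the frozen-vertex exception in step (1) (``not both $i$ and $j$ frozen''), but since $k$ is mutable this clause is automatically satisfied for paths through $k$, and $2$-cycle cancellation in step (3) is local to the multiset of arrows between each fixed pair, so it commutes with taking the induced subquiver on $\pi_i$.

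\textbf{The main obstacle} I anticipate is bookkeeping the asymmetry of the two bullets in Definition~\ref{def:CP} cleanly: the argument is not symmetric in arrow direction, so I would either treat the two cases in parallel or reduce the second to the first by reversing all arrows (which commutes with mutation up to relabeling). The underlying combinatorial fact is elementary, so the challenge is purely in organizing the case analysis so that the single phrase ``at least one endpoint of any path through $k$ lies in $\pi_i$'' does all the work without omitting the degenerate configurations (bridging arrows not incident to $k$, which are untouched by $\mu_k$ and need only be noted once).
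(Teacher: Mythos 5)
Your proposal is correct and takes essentially the same route as the paper's proof: a case split on whether the component contains $k$, with the key point in both being that the component-preserving condition (activated by sign-coherence, since $k$ carries a frozen arrow) forces all bridging arrows at $k$ into one direction, so step (1) of mutation can never create an arrow with both endpoints in a component other than $\pi_i$, while for the component containing $k$ restriction and mutation trivially agree. One wording slip worth fixing: a newly created arrow merely \emph{touching} $\pi_i$ need not be produced by mutating $Q_i$ alone (it may be a new bridging arrow); only arrows with \emph{both} endpoints in $\pi_i$ are, which is all the lemma requires after restricting.
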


\begin{proof}
First notice that these are in fact two ice quivers on the same set of vertices. To check that the lemma holds we need to see that each step of mutation has the same effect on the subquivers $\mu_k(Q)_i$ and $\mu_k(Q_i)$ for each $i$. The key step of mutation to check is where new arrows are created, which is step one in our definition of mutation.
There are two cases to consider:

\vspace{.3cm}

\underline{Case One:} $k \in \pi_i$

\vspace{.3cm}

Let $a \rightarrow b$ be an arrow in $\mu_k(Q_i)$ created by mutation at vertex $k$. Then since $Q_i$ is the quiver $Q$ restricted to the component $\pi_i$ we know that $a,b$ along with $k$ are elements of $V(Q_i)$. Therefore the arrows $a \rightarrow k$ and $k \rightarrow b$ are elements of $E(Q_i)$. Therefore all of these arrows are present in $Q$ and hence the arrow $a \rightarrow b$ is present in $\mu_k(Q)$. Since both endpoints of the arrow are in $\pi_i$ the arrow $a\rightarrow b$ is also created in the mutation $\mu_k(Q)_i$. 

We will now show this is a biconditional relationship. Assume $a \rightarrow b$ is an arrow  in $\mu_k(Q)_i$ which is created from mutation.
This occurs if and only if $a \rightarrow k \rightarrow b$ is present in $Q$ and $a,b \in \pi_i$.
Since we have assumed that $k \in \pi_i$ we know that $a,b,k \in \pi_i$ and the arrow $a \to b$ is also created in $\mu_k(Q_i)$.

\vspace{.3cm}

\underline{Case Two:} $k \not\in \pi_i$

\vspace{.3cm}

Since $k$ is not a vertex in $Q_i$ we will not be able to mutate the quiver $Q_i$ in direction $k$. Therefore $\mu_k(Q_i)=Q_i$. Now what we must check is that \emph{no} arrow $a \rightarrow b$ is created in $\mu_k(Q)_i$ by step (1) of mutation. 

By way of contradiction, assume that $a \rightarrow b$ in $\mu_k(Q)_i$ is created by the composition of mutation and restriction.
Then $a \rightarrow k \rightarrow b$ is present in $Q$ and also $a, b \in \pi_i$. 
But since $k$ is not in the same component as $a$ and $b$, arrows $a \rightarrow k$ and $k \rightarrow b$ are bridging arrows in opposite directions. This is a contradiction since each component preserving vertex is incident to bridging arrows in at most one direction.

\end{proof}

\subsection{Applications to reddening sequences and maximal green sequences}

We have seen that if $k$ is a component preserving vertex, then $\mu_k$ only affects arrows in $Q_i$ and possibly bridging arrows.
This can be extremely useful in the context of reddening sequences.
The goal is to utilize reddening sequences on each component to create a reddening sequence for the larger quiver.
This turns out to be possible if at each mutation step you are performing a component preserving mutation.
The following is a useful consequence which follows directly from the sign-coherence of $c$-vectors as presented in \cite{DWZ} and Remark~\ref{support} on the support of $c$-vectors.

\begin{lemma}\label{lemsign}
Let $(\widehat{Q},\widehat{\pi})$ be a partitioned framed quiver. Let $\sigma$ be any sequence of component preserving mutations. Also, let $v$ be a vertex in the component $\pi_i$. Then the color of a vertex $v$ in $\mu_{\sigma}(\widehat{Q})$ is the same as the color of the vertex $v$ in  $\mu_{\sigma}(\widehat{Q})_i$.
\end{lemma}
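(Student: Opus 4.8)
The plan is to reduce the statement about colors to a statement about $c$-vectors, and then to invoke Remark~\ref{support} to see that these $c$-vectors are literally unchanged when one passes from the whole quiver $\mu_\sigma(\widehat{Q})$ to its component $\mu_\sigma(\widehat{Q})_i$. Recall that the color of a mutable vertex $v$ is determined entirely by the arrows between $v$ and the frozen vertices: $v$ is green when no frozen vertex has an arrow into $v$, and red when no frozen vertex receives an arrow out of $v$. Equivalently, the color of $v$ is read off from the sign of its $c$-vector, which records precisely these arrows to and from the frozen copies $j'$. So the entire lemma is really the assertion that the $c$-vector of $v$, computed in $\mu_\sigma(\widehat{Q})$, agrees with the $c$-vector of $v$ computed in $\mu_\sigma(\widehat{Q})_i$.

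The key input is Remark~\ref{support}: because $\sigma$ consists of component preserving mutations starting from a framed quiver, every arrow in $\mu_\sigma(\widehat{Q})$ that touches a frozen vertex connects a mutable vertex and a frozen vertex lying in the \emph{same} component. In other words, the support of the $c$-vector of $v \in \pi_i$ is contained entirely in component $i$. First I would make this precise: the arrows between $v$ and frozen vertices in $\mu_\sigma(\widehat{Q})$ are all arrows between $v$ and frozen vertices $j'$ with $j' \in \widehat{\pi_i}$. Since forming the induced subquiver $\mu_\sigma(\widehat{Q})_i$ only deletes vertices outside $\pi_i$ (and their incident arrows), and none of the frozen-adjacent arrows at $v$ involve such deleted vertices, every arrow between $v$ and a frozen vertex survives the restriction, and no new ones are introduced. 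Hence the local picture around $v$ involving frozen vertices is identical in $\mu_\sigma(\widehat{Q})$ and in $\mu_\sigma(\widehat{Q})_i$.

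With that identification in hand, the conclusion is immediate: a frozen vertex has an arrow into $v$ in $\mu_\sigma(\widehat{Q})$ if and only if it does so in $\mu_\sigma(\widehat{Q})_i$, and likewise for arrows out of $v$. Therefore $v$ is green in one quiver exactly when it is green in the other, and similarly for red, which is the claim. I would also remark that sign-coherence (established in \cite{DWZ}) guarantees that $v$ is in fact either green or red in $\mu_\sigma(\widehat{Q})$, so ``color'' is well defined; and by Lemma~\ref{mu} the mutation sequence restricts coherently to the component, so $\mu_\sigma(\widehat{Q})_i$ is itself obtained from the framed component $(\widehat{Q})_i = \widehat{(Q_i)}$ by a sequence of mutations, making the notion of color on the component side legitimate as well. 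The only real content is Remark~\ref{support}, so the main obstacle is simply being careful that restriction to the component neither destroys nor creates any arrow incident to a frozen vertex at $v$ — and that is exactly what the containment of the $c$-vector support in a single component delivers.
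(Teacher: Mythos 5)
Your proof is correct and takes essentially the same route as the paper: the paper offers no separate argument for this lemma, stating only that it ``follows directly from the sign-coherence of $c$-vectors as presented in \cite{DWZ} and Remark~\ref{support},'' and your write-up is precisely that deduction made explicit (including the useful observation that Lemma~\ref{mu} is what makes ``color'' meaningful in the restricted quiver $\mu_{\sigma}(\widehat{Q})_i$). No gaps.
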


\begin{theorem}\label{cor1}
Let $(\widehat{Q},\widehat{\pi})$ be a framed partition quiver where for each $\widehat{Q_i}$ we have a reddening sequence $\sigma_i$. Then let $\tau$ be a shuffle of the $\sigma_i$ such that at every mutation step of the sequence $\tau$ we have that $k$ is component preserving with respect to $\pi$. Then $\tau$ is a reddening sequence for $\widehat{Q}$.
\label{cor:CPred}
\end{theorem}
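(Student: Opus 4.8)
The plan is to reduce the global reddening condition to a component-by-component check. By definition $\tau$ is a reddening sequence for $\widehat{Q}$ precisely when every mutable vertex of $\mu_\tau(\widehat{Q})$ is red. Since each $\sigma_i$ is already known to redden its own component, I would try to show that, from the point of view of any single component $\pi_i$, running the shuffle $\tau$ on the whole quiver looks exactly like running $\sigma_i$ on $\widehat{Q_i}$; the color of a vertex can then be read off inside its component by invoking Lemma~\ref{lemsign}.

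Concretely, the key intermediate claim is that for every $i$,
\[\mu_\tau(\widehat{Q})_i = \mu_{\sigma_i}(\widehat{Q_i}).\]
I would prove this by induction on the number of mutations of $\tau$ performed so far, with the base case $\widehat{Q}_i = \widehat{Q_i}$ supplied by the identity $\widehat{(Q_i)} = (\widehat{Q})_i$ noted after the definition of $\widehat{\pi}$. For the inductive step, let $k$ be the next vertex mutated. Since $k$ is component preserving at this step by hypothesis, Lemma~\ref{mu} lets me interchange mutation at $k$ with restriction to component $i$, using the identity $\mu_k(Q)_i = \mu_k(Q_i)$ applied to the current quiver. I then split into two cases. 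If $k \in \pi_i$, the mutation genuinely advances the restricted sequence, and because $\tau$ is a shuffle the relative order of the vertices lying in $\pi_i$ agrees with their order in $\sigma_i$, so this mutation is exactly the next letter of $\sigma_i$. If $k \notin \pi_i$, then $k$ is not a vertex of $\widehat{Q_i}$ at all, so $\mu_k$ acts trivially on the restriction and $\sigma_i$ is not advanced. Iterating to the end of $\tau$, the vertices of $\tau$ falling in $\pi_i$ are precisely $\sigma_i$ read in order, which gives the displayed equality.

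With the claim in hand the theorem follows quickly. Let $v$ be any mutable vertex of $\widehat{Q}$ and let $\pi_i$ be its component. By Lemma~\ref{lemsign}, applied to the component preserving sequence $\tau$, the color of $v$ in $\mu_\tau(\widehat{Q})$ equals its color in $\mu_\tau(\widehat{Q})_i$, which by the claim is $\mu_{\sigma_i}(\widehat{Q_i})$. Since $\sigma_i$ is a reddening sequence for $\widehat{Q_i}$, the vertex $v$ is red there, hence red in $\mu_\tau(\widehat{Q})$. As $v$ was arbitrary, every mutable vertex of $\mu_\tau(\widehat{Q})$ is red and $\tau$ is a reddening sequence for $\widehat{Q}$.

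The substance of the argument lives entirely in the two preceding lemmas, so I do not expect a deep obstacle; the theorem is essentially their assembly. The one place demanding care is the bookkeeping in the induction: one must use the order-preserving property of a shuffle to identify the $\pi_i$-subsequence of $\tau$ with $\sigma_i$, and one must correctly handle the case $k \notin \pi_i$, where Lemma~\ref{mu} degenerates to the statement that mutation outside a component leaves that component's restriction unchanged. A secondary point worth recording explicitly is that every letter of $\tau$ is a mutable vertex of $\widehat{Q}$, since it comes from some reddening sequence $\sigma_i$ of a component, so $\tau$ is a legitimate mutation sequence and the hypothesis that each step is component preserving is exactly what is needed to invoke Lemmas~\ref{mu} and~\ref{lemsign}.
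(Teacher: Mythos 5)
Your proposal is correct and follows essentially the same route as the paper: the paper's proof is the one-line chain $\mu_{\tau}(\widehat{Q})_i=\mu_{\tau}(\widehat{Q_i})=\mu_{\sigma_i}(\widehat{Q_i})$ justified by Lemma~\ref{mu}, followed by Lemma~\ref{lemsign} to transfer redness from each component to the full quiver. Your induction on the length of $\tau$ and the shuffle bookkeeping merely make explicit what the paper leaves implicit in iterating Lemma~\ref{mu}.
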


\begin{proof}
Let $(\widehat{Q},\widehat{\pi})$ be a framed partition quiver. Then since each mutation in $\tau$ is component preserving you have from the Lemma~\ref{mu} that
\[ \mu_{\tau}(\widehat{Q})_i=\mu_{\tau}(\widehat{Q_i})=\mu_{\sigma_i}(\widehat{Q_i}).\]

Meaning that for each $i$ any vertex $v \in \pi$ is red in $\mu_{\tau}(\widehat{Q})_i$ since it is the result of running a reddening sequence. 
It then follows from Lemma~\ref{lemsign} that $v$ is red in the larger quiver $\mu_{\tau}(\widehat{Q})$.

\end{proof}

\begin{corollary}\label{cor2}
Furthermore if additionally you have that each $\sigma_i$ is a maximal green sequence for the component $\widehat{Q}_i$ then you have that $\tau$ is a maximal green sequence of for $\widehat{Q}$.
\label{cor:CPMGS}
\end{corollary}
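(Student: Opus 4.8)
The plan is to leverage Theorem~\ref{cor:CPred} directly, since Corollary~\ref{cor:CPMGS} adds only the extra hypothesis that each $\sigma_i$ is a \emph{maximal green} sequence rather than merely a reddening sequence. Recall that a maximal green sequence is by definition a reddening sequence in which every mutation occurs at a \emph{green} vertex. So the reddening conclusion is already handed to us by Theorem~\ref{cor:CPred}: the shuffle $\tau$ is a reddening sequence for $\widehat{Q}$. The only thing left to verify is the color condition, namely that every mutation step of $\tau$ occurs at a green vertex.

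First I would fix an arbitrary mutation step of $\tau$, say mutation at a vertex $k$, and let $i$ be the index with $k \in \pi_i$. Since $\tau$ is a shuffle of the $\sigma_i$, the entries of $\tau$ coming from component $i$ appear in the same relative order as they do in $\sigma_i$; thus at the moment we mutate at $k$, the mutations already performed \emph{within} component $i$ are exactly an initial segment of $\sigma_i$. The key tool is Lemma~\ref{mu}, which (applied inductively along $\tau$, as in the proof of Theorem~\ref{cor:CPred}) gives that the component $\mu_{\tau'}(\widehat{Q})_i$ agrees with $\mu_{\sigma_i'}(\widehat{Q_i})$, where $\tau'$ and $\sigma_i'$ denote the respective initial segments up to this step. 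Then Lemma~\ref{lemsign} tells us that the color of $k$ in the full quiver $\mu_{\tau'}(\widehat{Q})$ equals the color of $k$ in the restricted quiver $\mu_{\tau'}(\widehat{Q})_i = \mu_{\sigma_i'}(\widehat{Q_i})$.

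Now I would close the argument: because $\sigma_i$ is a maximal green sequence for $\widehat{Q_i}$, the mutation at $k$ within $\sigma_i$ occurs at a green vertex of $\mu_{\sigma_i'}(\widehat{Q_i})$. By the color-transfer just established via Lemma~\ref{lemsign}, $k$ is therefore green in $\mu_{\tau'}(\widehat{Q})$ as well, which is precisely the vertex we are about to mutate in $\tau$. Since the step was arbitrary, every mutation of $\tau$ occurs at a green vertex. Combined with the already-known fact that $\tau$ is reddening, this shows $\tau$ is a maximal green sequence for $\widehat{Q}$.

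I do not anticipate a genuine obstacle here, since the corollary is essentially a refinement of the theorem with the identical machinery; the only point requiring care is the bookkeeping that the initial segment of $\tau$ restricted to component $i$ is genuinely the matching initial segment of $\sigma_i$ (so that Lemma~\ref{mu} and Lemma~\ref{lemsign} apply at the correct intermediate quiver), which is immediate from the order-preserving property in the definition of a shuffle. The subtlety to keep in mind is that ``green'' must be checked at the intermediate quiver right before the mutation at $k$, not at the initial framed quiver, so the inductive use of Lemma~\ref{mu} along $\tau$ is what makes the color comparison legitimate at each step.
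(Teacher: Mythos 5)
Your proposal is correct and follows essentially the same route as the paper: both obtain the reddening conclusion from Theorem~\ref{cor:CPred} and then use Lemma~\ref{mu} together with Lemma~\ref{lemsign} to reduce the greenness check at each mutation step to the corresponding component, where it holds because $\sigma_i$ is a maximal green sequence. Your version merely makes explicit the initial-segment bookkeeping that the paper leaves implicit.
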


\begin{proof}
By Theorem~\ref{cor:CPred} we know we have a reddening sequence.
By Lemma~\ref{lemsign} and Lemma~\ref{mu} to decide if a mutation step occurred at a green vertex we only need to look at the component containing that vertex. Then we consider that each $\sigma_i$ is a maximal green sequence and it follows from the same equation: \[ \mu_{\tau}(\widehat{Q})_i=\mu_{\tau}(\widehat{Q_i})=\mu_{\sigma_i}(\widehat{Q_i}).\]
\end{proof}

This can be quite useful. In practice what it tells you is that if you partition your quiver up into components, and you know a reddening (or maximal green) sequence for each component then you can try and shuffle the sequences together. If every mutation in the shuffle is component preserving, then you have successfully created a reddening (or maximal green) sequence for the larger quiver.
In the sections that follow we will show some of the applications of using this approach to find maximal green and reddening sequences for a variety of quivers.
Before showing new applications of the component preserving mutation method, we first provide some examples of previously known maximal green sequences that come from component preserving mutations.
These known examples serve to show that our framework unifies many known maximal green sequences.
Also the following examples aim to demonstrate that applications of Corollary~\ref{cor2} occur ``in nature'' and thus Definition~\ref{def:CP} is not too restrictive as it includes many naturally occurring examples. 

\subsection{Example: Admissible source sequences}
\begin{figure}
\begin{tikzpicture}
\node (1) at (0,0) {$1$};
\node (2) at (1,0) {$2$};
\node (3) at (0,1) {$3$};
\node (4) at (-1,0) {$4$};
\node (5) at (0,-1) {$5$};
\draw[-{latex}] (1) to (2);
\draw[-{latex}] (1) to (3);
\draw[-{latex}] (4) to (1);
\draw[-{latex}] (1) to (5);
\end{tikzpicture}
\caption{An acyclic quiver with maximal green sequence $(4,1,2,3,5)$.}
\label{fig:acyclic}
\end{figure}
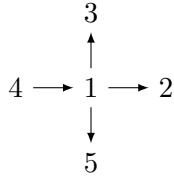

A sequence of vertices $(i_1, i_2, \dots, i_n)$ of a quiver $Q$ with $n$ vertices is called an \emph{admissible numbering} by sources if $\{i_1, i_2, \dots, i_n\} = V(Q)$ and $i_j$ is a source of $\mu_{i_{j-1}} \circ \cdots \circ \mu_{i_1} (Q)$.
It is well known that any acyclic quiver $Q$ admits an admissible numbering by sources and that any such admissible numbering by sources $(i_1, i_2, \dots, i_n)$ is a maximal green sequence~\cite[Lemma 2.20]{BDP}.
In terms of component preserving mutations, $(i_1, i_2, \dots, i_n)$ being an admissible numbering by sources means that $\tau = \mu_{i_n} \circ \mu_{i_{n-1}} \circ \cdots \circ \mu_{i_1}$ is a component preserving sequence of mutations with respect to the partition $\{i_1\}/\{i_2\}/ \cdots /\{i_n\}$ of $V(Q)$ into singletons.
Corollary~\ref{cor:CPMGS} states $(i_1, i_2, \dots, i_n)$ is a maximal green sequence in this special case.
Figure~\ref{fig:acyclic} shows an example of an acyclic quiver where $(4,1,2,3,5)$ is a maximal green sequence from an admissible numbering by sources with the vertices as labeled in the figure.

\subsection{Example: Direct sum}
\begin{figure}
\begin{tikzpicture}
\node (1) at (0,0.5) {$1$};
\node (2) at (0,-0.5) {$2$};
\draw[-{latex}] (1) to (2);

\node (4) at (3,1.5) {$4$};
\node (5) at (3,0) {$5$};
\node (6) at (3,-1.5) {$6$};
\draw[-{latex}] (4) to (5);
\draw[-{latex}] (6) to (5);

\draw[-{latex}] (1) to (4);
\draw[-{latex}] (1) to (5);
\draw[-{latex}] (2) to (4);
\draw[-{latex}] (2) to[bend right=30] (5);
\draw[-{latex}] (2) to (5);
\draw[-{latex}] (2) to (6);
\end{tikzpicture}
\caption{A direct sum of quivers with maximal green sequence $(2,1,2,4,6,5)$.}
\label{fig:directsum}
\end{figure}
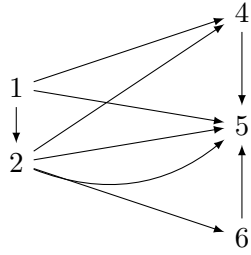

A \emph{direct sum} of quivers $A$ and $B$ is any quiver $Q$ with
\[V(Q) = V(A) \sqcup V(B)\]
\[E(Q) = E(A) \sqcup E(B) \sqcup E\]
where $E$ is any set of arrows such which has for any $i \to j \in E$ implies $i \in V(A)$ and $j \in V(B)$.
In other words, a direct sum of quivers simply takes the disjoint union of the two quivers then adds additional arrows between the quivers with the condition that all arrows are directed from one quiver to the other.
We can take the partition $V(A)/V(B)$ of $V(Q)$ and the consider the concatenation $\tau = \tau_B \tau_A$ for any reddening sequence $\tau_A$ of $A$ and $\tau_B$ of $B$.
Then $\tau$ will be component preserving and hence a reddening sequence by Theorem~\ref{cor:CPred}

An example of a direct sum of quivers $A$ and $B$ where $V(A) = \{1,2\}$ and $V(B) = \{4,5,6\}$ is given in Figure~\ref{fig:directsum}.
We can take the maximal green sequences $(2,1,2)$ and $(4,6,5)$ on the components and obtain maximal green sequence $(2,1,2,4,6,5)$ on the direct sum.
We will not prove that such sequences of mutations are component preserving since proofs for maximal green sequences and reddening sequences of direct sums are already in the literature~\cite[Theorem 3.12]{GM}~\cite[Theorem 4.5]{CL}.

\subsection{Example: Square products}

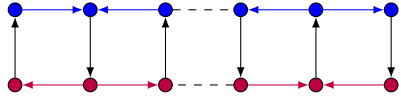
\begin{figure}

\begin{tikzpicture}
\node[draw, circle,fill=blue,scale=0.5] (1) at (-1,1) {};
\node[draw, circle,fill=purple,scale=0.5] (2) at (-1,0) {};
\node[draw, circle,fill=blue,scale=0.5] (4) at (0,1) {};
\node[draw, circle,fill=purple,scale=0.5] (5) at (0,0) {};
\node[draw, circle,fill=blue,scale=0.5] (7) at (1,1) {};
\node[draw,  circle,fill=purple,scale=0.5] (8) at (1,0) {};
\node[draw, circle,fill=blue,scale=0.5] (10) at (2,1) {};
\node[draw, circle,fill=purple,scale=0.5] (11) at (2,0) {};
\node[draw, circle,fill=blue,scale=0.5] (12) at (3,1) {};
\node[draw, circle,fill=purple,scale=0.5] (13) at (3,0) {};
\node[draw, circle,fill=blue,scale=0.5] (14) at (4,1) {};
\node[draw, circle,fill=purple,scale=0.5] (15) at (4,0) {};

\draw[-{latex},blue] (1) to (4);
\draw[-{latex},blue] (7) to (4);
\draw[-{latex},blue] (12) to (10);
\draw[-{latex},blue] (12) to (14);
\draw[dashed] (7) to (10);
\draw[-{latex},purple] (5) to (2);
\draw[-{latex},purple] (5) to (8);
\draw[-{latex},purple] (11) to (13);
\draw[-{latex},purple] (15) to (13);
\draw[dashed] (11) to (8);
\draw[-{latex}] (2) to (1);
\draw[-{latex}] (4) to (5);
\draw[-{latex}] (8) to (7);
\draw[-{latex}] (10) to (11);
\draw[-{latex}] (13)to (12);
\draw[-{latex}] (14) to (15);
\end{tikzpicture}
\caption{An arbitrary length square product of type $(A_2, A_n)$.}
\label{fig:squareA2}
\end{figure}

The \emph{square product} of two Dynkin quivers is considered by Keller in his work on periodicity~\cite{KellerPeriod}.
For two type $A$ quivers the square product is a grid with all square faces oriented in a directed cycle. 
In Figure~\ref{fig:squareA2} we show a square product of type ($A_2$, $A_n$).
Consider the partition $\pi=B/B'$ of the quiver in Figure~\ref{fig:squareA2} where $B$ is the set of vertices in the top row and $B'$ is the set of vertices in the bottom row.
Then the quiver restricted to either $B$ or $B'$ is an alternating path which has a maximal green sequence of repeatedly applying sink mutations. 
A component preserving shuffle for these quivers can be found by alternating between mutations in $B$ and $B'$ until you have completed both maximal green sequences.
This example generalizes to many other quivers in a family called \emph{bipartite recurrent quivers}.
Maximal green sequences for bipartite recurrent quivers will be investigated in more depth in Section~\ref{sec:bipartite}.

\subsection{Example: Dreaded torus}\label{sec:torus}
\begin{figure}
\begin{tikzpicture}
\node (1) at (0,0) {$1$};
\node (2) at (2,0) {$2$};
\node (3) at (1,0.75) {$3$};
\node (4) at (1,2) {$4$};
\draw[-{latex}] (1) to (2);
\draw[-{latex}] (1) to (3);
\draw[-{latex}] (2) to (3);
\draw[-{latex}] (4) to (1);
\draw[-{latex}] (4) to (2);
\draw[-{latex}, bend right=13] (3) to (4);
\draw[-{latex},bend left=13] (3) to (4);
\end{tikzpicture}
\caption{The quiver for the torus with one boundary component and one marked point. A maximal green sequence for this quiver is $(1,3,4,2,1,3)$.}
\label{fig:dreaded}
\end{figure}
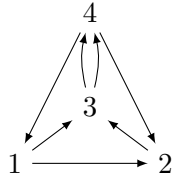

Let $Q$ be the quiver shown in Figure~\ref{fig:dreaded} which comes from a triangulation of the torus with one boundary component and a single marked point on the boundary.
With vertices as labeled in the figure we can take the partition $\{1,4\}/\{2,3\}$  and the maximal green sequences $(1,4,1)$ and $(3,2,3)$ on the two components.
The sequence $(1,3,4,2,1,3)$ is component preserving and hence a maximal green sequence by Corollary~\ref{cor:CPMGS}.
The quiver $Q$ is an example of a quiver which admits a maximal green sequence, and hence a reddening sequence, but is not a member of the class $\mathcal{P}$ of Kontsevich and Soibelman~\cite{KS}.
So, $Q$ should be included in a solution to a question posed by the first two authors which seeks to identify a collection of quivers which generate all quivers with reddening sequences by using quiver mutation and the direct sum construction~\cite[Question 3.6]{BanffRed}.

\subsection{Example: Cremmer-Gervais}

In the Gekhtman, Shapiro, and Vainshtein approach to cluster algebras with Poisson geometry there is an exotic cluster structure on $SL_n$ known as the Cremmer-Gervais cluster structure~\cite{GSV-CGshort, GSV-CGlong}.
The mutable part of the quiver defining this cluster structure for the case $n = 3$ is shown in Figure~\ref{fig:CG}.
The cluster algebra has the interesting property that whether or not it agrees with its upper cluster algebra is ground ring dependent~\cite[Proposition 4.1]{BMS}.
A maximal green sequence for the quiver in Figure~\ref{fig:CG} is $(2,3,4,1,5,1,6,3)$ which can be obtained by considering the partition $\{1,2,5\} / \{3,6\} / \{4\}$ along with maximal green sequences $(2,1,5,1)$, $(3,6,3)$, and $(4)$.
The authors believe it would be interesting to try the technique of component preserving maximal green sequences on quivers for the Cremmer-Gervais cluster structure for larger values for $n$.

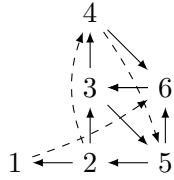
\begin{figure}
\begin{tikzpicture}
\node at (2,3)  (23) {$4$};
\node at (2,2)  (22) {$3$};
\node at (3,2)  (32) {$6$};
\node at (1,1)  (11) {$1$};
\node at (2,1)  (21) {$2$};
\node at (3,1)  (31) {$5$};

\draw[-{latex}] (21) -- (11);
\draw[-{latex}] (21) -- (22);
\draw[-{latex}] (32) -- (22);
\draw[-{latex}] (22) -- (31);
\draw[-{latex}] (31) -- (32);
\draw[-{latex}] (31) -- (21);
\draw[-{latex}] (22) -- (23);
\draw[-{latex}] (23) -- (32);
\draw[-{latex}] (32) -- (22);

\draw[-{latex}, dashed] (11) [bend right=8]to (32);
\draw[-{latex}, dashed] (21) [bend left=20]to (23);
\draw[-{latex}, dashed] (23) [bend left=8]to (31);

\end{tikzpicture}
\caption{The mutable part of the quiver defining the Cremmer-Gervais cluster structure.}
\label{fig:CG}
\end{figure}

\section{Applications to quiver dominance}\label{sec:dom}

One natural question that arises when discussing any algebraic object is to ask questions about what information can be extracted from considering the smaller sub-objects inside your larger object.
The methods we have presented thus far give a way of producing reddening sequences on larger quivers by considering reddening sequences on quivers with fewer vertices.
In this section we will give a way of producing reddening sequences on larger quivers by considering reddening sequences on quivers with fewer arrows but the same number of vertices.

Component preserving mutations give rise to a \emph{dominance phenomenon} of quivers. 
In terms of matrices dominance is given by the following definition.
One obtains a definition of dominance in quivers by considering its skew-symmetric exchange matrix.

\begin{definition}
Given $n \times n$ exchange matrices $B = [b_{ij} ]$ and $A = [a_{ij}]$, we say $B$ dominates $A$ if for each $i$ and $j$, we have $b_{ij}a_{ij} \geq 0$ and $|b_{ij} | \geq |a_{ij}|$. 
\end{definition}

An initiation of a systematic study of dominance for exchange matrices was put forth by Reading~\cite{Reading}. 
Dominance had previously been considered by Huang, Li, and Yang~\cite{seedHom} as part of their definition of a \emph{seed homomorphism}.
One instance of the dominance phenomenon observed by Reading is the following observation about scattering fans.

\begin{phenomenon*}[{\cite[Phenomenon III]{Reading}}]
Suppose that $B$ and $B'$ are exchange matrices such that $B$ dominates $B'$. In many cases, the scattering fan of $B$ refines the scattering fan of $B'$.
\end{phenomenon*}

\begin{remark}
Following~\cite{GHKK} to any quiver one can associate a cluster scattering diagram inside some ambient vector space.
Reddening sequences and maximal green sequences then correspond to paths in the ambient vector space subject to certain restrictions coming from the scattering diagram.
A cluster scattering diagram partitions the ambient vector into a complete fan called the scattering fan~\cite{scattering}.
Hence, the phenomenon that the scattering fan of $B$ often refines the scattering fan of $B'$ when $B$ dominates $B'$ means that it should be more difficult to find a reddening sequence for $B$ since the scattering diagram of $B$ has additional walls imposing more constraints.
However, we will find certain conditions for when a reddening sequence for $B'$ will still work as a reddening sequence for $B$.
\end{remark}


In this section we will apply the results of Section~\ref{sec:CPdef} to show that the existence of a reddening (maximal green) sequence passes through the dominance relationship in many cases. The interesting aspect of this result is it appears to go in the wrong direction; the property is passed from the dominated quiver to the dominating quiver.
Let $B$ dominate $A$. If $A$ has a reddening (maximal green) sequence then, we wish to produce a reddening (maximal green) sequence for $B$. This is not a true statement in general, but if we put some restrictions on \emph{how} $B$ dominates $A$ and \emph{extra conditions} on the reddening or maximal green sequence this turns out to be true.
Going forward we will consider dominance in terms of the quivers instead of exchange matrices. A reformulation of dominance is the following.

\begin{definition}
Given quivers $B$ and $A$ on the same vertex set we say that $B$ dominates $A$ if:

\begin{itemize}
    \item for every pair of vertices $(i,j)$ any arrows between $i$ and $j$ in $A$ are in the same direction as any arrow between $i$ and $j$ in $B$; and
    \item for every pair of vertices $(i,j)$ the number of arrows in $B$ involving vertices $i$ and $j$ is greater than or equal to the number of arrows in $A$ involving $i$ and $j$.
\end{itemize}
\end{definition}

\begin{figure}
\begin{tikzpicture}
\node[draw, circle,fill=black,scale=0.5] (1) at (-1,1) {};
\node[draw, circle,fill=black,scale=0.5] (2) at (-1,0) {};
\node[draw, circle,fill=black,scale=0.5] (4) at (0,1) {};
\node[draw, circle,fill=black,scale=0.5] (5) at (0,0) {};
\node[draw, circle,fill=black,scale=0.5] (7) at (1,1) {};
\node[draw, circle,fill=black,scale=0.5] (8) at (1,0) {};
\node[draw, circle,fill=black,scale=0.5] (10) at (2,1) {};
\node[draw, circle,fill=black,scale=0.5] (11) at (2,0) {};

\draw[-{latex}] (1) to (4);
\draw[-{latex}] (7) to (4);
\draw[-{latex}] (7) to (10);
\draw[-{latex}] (5) to (2);
\draw[-{latex}] (5) to (8);
\draw[-{latex}] (11) to (8);
\draw[-{latex}] (2) to (1);
\draw[-{latex}] (4) to (5);
\draw[-{latex}] (8) to (7);
\draw[-{latex}] (10) to (11);

\node[draw, circle,fill=black,scale=0.5] (19) at (5,1) {};
\node[draw, circle,fill=black,scale=0.5] (12) at (5,0) {};
\node[draw, circle,fill=black,scale=0.5] (14) at (6,1) {};
\node[draw, circle,fill=black,scale=0.5] (15) at (6,0) {};
\node[draw, circle,fill=black,scale=0.5] (17) at (7,1) {};
\node[draw, circle,fill=black,scale=0.5] (18) at (7,0) {};
\node[draw, circle,fill=black,scale=0.5] (110) at (8,1) {};
\node[draw, circle,fill=black,scale=0.5] (111) at (8,0) {};

\node[scale=.5] at (4.8,.5) {2};
\node[scale=.5] at (5.8,.5) {3};
\node[scale=.5] at (6.8,.5) {4};
\node[scale=.5] at (7.8,.5) {5};

\draw[-{latex}] (19) to (14);
\draw[-{latex}] (17) to (14);
\draw[-{latex}] (17) to (110);
\draw[-{latex}] (15) to (12);
\draw[-{latex}] (15) to (18);
\draw[-{latex}] (111) to (18);

\draw[-{latex}] (12) to (19);
\draw[-{latex}] (14) to (15);
\draw[-{latex}] (18) to (17);
\draw[-{latex}] (110) to (111);

\end{tikzpicture}
\caption{An example where the quiver on the right dominates the quiver on the left.}
\label{fig:dominance}
\end{figure}
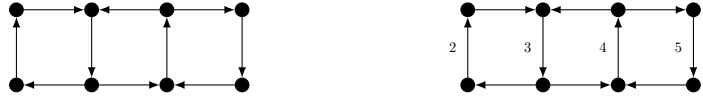

For an example of quiver dominance see Figure~\ref{fig:dominance} where multiplicity of an arrow greater than $1$ is denoted by the number next to the arrow.
We now need to establish the notion of \emph{$\pi$-dominance}.
This is a restrictive form of dominance, where we the quivers $A$ and $B$ have the same component subquivers with respect to a partition $\pi$ but have the multiplicity of the bridging arrows altered in a consistent way. 

\begin{definition}
Let $(A, \pi)$ and $(B, \pi)$ be two partitioned ice quivers with the same vertex set and same set partition $\pi$. We say that $B$ $\pi$-dominates \footnote{This is a more restrictive version of the dominance phenomena presented by Reading. In general, not all quivers $B$ which dominate a quiver $A$ will $\pi$-dominate the quiver.} $A$ if:

\begin{itemize}
    \item the component quivers $A_i=B_i$ for each $i$;
    \item for all $u\in B_i$ and $v \in B_j$ with $i\neq j$ we have the $\# (u \rightarrow v$ in $B)$ is equal to $d_{ij} \times \#(u \rightarrow v$ in $A)$, where $d_{ij}$ is a positive integer that is the same for the entire $i$-th and $j$-th components.
\end{itemize}
\label{def:pidom}
\end{definition}

The $d_{ij}$ are called the \emph{dominance constants} associated to $(B, \pi)$ and $(A, \pi)$.
As usual in Definition~\ref{def:pidom} arrows in the opposite direction are counted as negative.
A practical way of thinking about $\pi$-dominance is that $B$ is obtained from the $A$ by scaling up the multiplicity of the bridging arrows between components by the appropriate dominance constant.
Notice that the dominance constants are always positive, and hence bridging arrows are always in the same direction after scaling by the dominance constants.
An example of $\pi$-dominance can be seen in Figure \ref{fig:pidominance}.
This example has the type $(A_2, A_4)$ square product on the left side and the $Q$-system quiver of type $A_4$ on the right side.  

\begin{figure}
\begin{tikzpicture}
\node[draw, circle,fill=blue,scale=0.5] (1) at (-1,1) {};
\node[draw, circle,fill=purple,scale=0.5] (2) at (-1,0) {};
\node[draw, circle,fill=blue,scale=0.5] (4) at (0,1) {};
\node[draw, circle,fill=purple,scale=0.5] (5) at (0,0) {};
\node[draw, circle,fill=blue,scale=0.5] (7) at (1,1) {};
\node[draw,  circle,fill=purple,scale=0.5] (8) at (1,0) {};
\node[draw, circle,fill=blue,scale=0.5] (10) at (2,1) {};
\node[draw, circle,fill=purple,scale=0.5] (11) at (2,0) {};

\draw[-{latex},blue] (1) to (4);
\draw[-{latex},blue] (7) to (4);
\draw[-{latex},blue] (7) to (10);
\draw[-{latex},purple] (5) to (2);
\draw[-{latex},purple] (5) to (8);
\draw[-{latex},purple] (11) to (8);
\draw[-{latex}] (2) to (1);
\draw[-{latex}] (4) to (5);
\draw[-{latex}] (8) to (7);
\draw[-{latex}] (10) to (11);

\node[draw, circle,fill=blue,scale=0.5] (19) at (5,1) {};
\node[draw, circle,fill=purple,scale=0.5] (12) at (5,0) {};
\node[draw, circle,fill=blue,scale=0.5] (14) at (6,1) {};
\node[draw, circle,fill=purple,scale=0.5] (15) at (6,0) {};
\node[draw, circle,fill=blue,scale=0.5] (17) at (7,1) {};
\node[draw, circle,fill=purple,scale=0.5] (18) at (7,0) {};
\node[draw, circle,fill=blue,scale=0.5] (110) at (8,1) {};
\node[draw, circle,fill=purple,scale=0.5] (111) at (8,0) {};

\node[scale=.5] at (4.8,.5) {2};
\node[scale=.5] at (5.8,.5) {2};
\node[scale=.5] at (6.8,.5) {2};
\node[scale=.5] at (7.8,.5) {2};

\draw[-{latex},blue] (19) to (14);
\draw[-{latex},blue] (17) to (14);
\draw[-{latex},blue] (17) to (110);
\draw[-{latex},purple] (15) to (12);
\draw[-{latex},purple] (15) to (18);
\draw[-{latex},purple] (111) to (18);

\draw[-{latex}] (12) to (19);
\draw[-{latex}] (14) to (15);
\draw[-{latex}] (18) to (17);
\draw[-{latex}] (110) to (111);

\end{tikzpicture}
\caption{This is $\pi$-dominance where the components are the horizontal rows of the quiver. The right hand quiver $\pi$-dominates the left hand quiver and $d_{12}=2$.}
\label{fig:pidominance}
\end{figure}
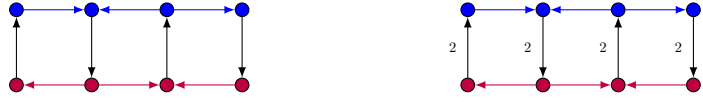

\begin{theorem}\label{dominancetheorem}
Let $k$ be a component preserving vertex in $(A,\pi)$ and $(B, \pi)$ be an ice quiver which $\pi$-dominates $A$ with dominance constants $d_{ij}$. Then $\mu_k(B)$ dominates $\mu_k(A)$ with dominance constants $d_{ij}$. 
\end{theorem}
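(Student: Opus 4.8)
The plan is to translate everything into signed arrow counts and then read off the two defining conditions of $\pi$-dominance (Definition~\ref{def:pidom}) directly from the Fomin--Zelevinsky matrix mutation rule. Write $a_{pq}$ and $b_{pq}$ for the signed number of arrows $p\to q$ in $A$ and $B$ (so arrows $q\to p$ are counted negatively), and let $c(v)$ denote the index of the component containing $v$. Adopting the convention $d_{rr}:=1$, the hypothesis that $B$ $\pi$-dominates $A$ becomes the single uniform relation
\[ b_{pq}=d_{c(p),c(q)}\,a_{pq}\qquad\text{for all vertices }p,q, \]
where the factor is $1$ exactly when $p,q$ lie in a common component (since $A_i=B_i$) and is the dominance constant otherwise; comparing the relation for $(p,q)$ and for $(q,p)$ and using skew-symmetry shows $d_{ij}=d_{ji}$, so this is well defined.

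First I would dispatch the first bullet of Definition~\ref{def:pidom}. Because every dominance constant is positive and $A_i=B_i$, the quiver $B$ has exactly the same within-component arrows as $A$ (including those incident to frozen vertices) and all its bridging arrows point in the same directions as in $A$; since Definition~\ref{def:CP} depends only on arrow directions and on which components the neighbors of $k$ occupy, $k$ is component preserving in $(B,\pi)$ as well. Lemma~\ref{mu} then applies to both quivers, and with $A_\ell=B_\ell$ it yields
\[ \mu_k(B)_\ell=\mu_k(B_\ell)=\mu_k(A_\ell)=\mu_k(A)_\ell\qquad\text{for every }\ell, \]
so the component subquivers of $\mu_k(A)$ and $\mu_k(B)$ agree.

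The substance of the argument is the second bullet, namely that the signed counts $a'_{pq},b'_{pq}$ of $\mu_k(A),\mu_k(B)$ again satisfy $b'_{pq}=d_{c(p),c(q)}a'_{pq}$. I would invoke the matrix mutation rule
\[ x'_{pq}=\begin{cases}-x_{pq}, & p=k\text{ or }q=k,\\[2pt] x_{pq}+\tfrac12\bigl(|x_{pk}|x_{kq}+x_{pk}|x_{kq}|\bigr), & \text{otherwise,}\end{cases} \]
which already incorporates the $2$-cycle cancellation of step~(3) of mutation. When $p=k$ or $q=k$ the claim is immediate from the sign flip. Otherwise, writing $r=c(p)$, $s=c(q)$, $i=c(k)$ and substituting $b_{pq}=d_{rs}a_{pq}$, $b_{pk}=d_{ri}a_{pk}$, $b_{kq}=d_{is}a_{kq}$, the positivity of the constants lets me pull them out of the absolute values to get
\[ b'_{pq}=d_{rs}a_{pq}+d_{ri}d_{is}\cdot\tfrac12\bigl(|a_{pk}|a_{kq}+a_{pk}|a_{kq}|\bigr). \]
This equals $d_{rs}a'_{pq}$ as soon as the quadratic term either vanishes or is scaled by a constant with $d_{ri}d_{is}=d_{rs}$.

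The identity $d_{ri}d_{is}=d_{rs}$ is exactly where the component preserving hypothesis enters, and verifying it is the step I expect to be the crux. The quadratic term is nonzero only when $k$ sits on a directed $2$-path $p\to k\to q$ or $q\to k\to p$. Since $k$ is component preserving it is incident to bridging arrows in at most one direction (the consequence of Definition~\ref{def:CP} exploited in the proof of Lemma~\ref{mu}); hence any arrow into $k$ has its tail in $\pi_i$ and any arrow out of $k$ has its head in $\pi_i$. In either orientation of the $2$-path this forces one of the two endpoints $p,q$ into the component $i$, i.e.\ $r=i$ or $s=i$. In that situation one of $d_{ri},d_{is}$ equals $d_{ii}=1$ while the other equals $d_{rs}$, so $d_{ri}d_{is}=d_{rs}$ precisely when the quadratic term is present. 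Therefore $b'_{pq}=d_{rs}a'_{pq}$ in all cases, which is the second bullet with the same constants $d_{ij}$, completing the proof that $\mu_k(B)$ $\pi$-dominates $\mu_k(A)$.
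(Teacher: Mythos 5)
Your proof is correct and follows essentially the same route as the paper's: both reduce to showing that when mutation at $k$ creates or cancels a bridging arrow via a directed path $p \to k \to q$, the component preserving condition forces one of $p,q$ into $k$'s component so that exactly one of the two scaling factors is trivial --- the paper phrases this as a green/red case split tracking multiplicities $\alpha,\beta,\gamma$ and their cancellation, while you package the same fact as the identity $d_{ri}d_{is}=d_{rs}$ inside the signed matrix mutation formula. One small wording slip: component preservation gives ``all in-arrows of $k$ come from $\pi_i$ \emph{or} all out-arrows of $k$ land in $\pi_i$,'' not the conjunction you wrote, but your ensuing deduction (``in either orientation of the $2$-path one endpoint lies in component $i$'') only needs the disjunction, so nothing breaks.
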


\begin{proof}
Since $k$ is a component preserving vertex in $(A,\pi)$ we know that $k$ is also a component preserving vertex in $(B,\pi)$ since the direction of the bridging arrows is unchanged by scaling by the multiple $d_{ij}$. Also as $k$ is component preserving in both $A$ and $B$ we know by Lemma~\ref{mu} that $\mu_k(A)_i=\mu_k(A_i)=\mu_k(B_i)=\mu_k(B)_i$. Therefore we only need to consider the bridging arrows between components. 

The bridging arrows incident to $k$ are only affected by the step of mutation which reverses arrows incident to $k$. Therefore dominance is preserved for these arrows because they are reversed by mutation at $k$ in both $A$ and $B$. 

Now we must check the number of bridging arrows created during mutation for both $\mu_k(B)$ and $\mu_k(A)$. 
For some nonnegative integer $\alpha$, we will use the notation $i \stackrel{\alpha}{\rightarrow} j$ to denote that there are $\alpha$ arrows from $i$ to $j$ in a quiver.

Assume $s\stackrel{\alpha}{\rightarrow} k \stackrel{\beta}{\rightarrow} t$ is present in $A$ with $\alpha, \beta \geq 0$. 
Then mutation will create arrows from $s \rightarrow t$ with multiplicity $\alpha \beta$.
Since we need only consider bridging arrows we will assume the $\alpha \beta$ many arrows from $s$ to $t$ created are bridging arrows.
In the case that $k$ is green we know that $s$ must be in the same component as $k$ because $k$ is component preserving.
Assume $k, s \in V(A_i)$ and $t \in V(A_j)$ for $i \neq j$.
We now will show that $\mu_k(B)$ creates $d_{ij}\alpha \beta$ arrows from $s$ to $t$.
The presence of $s\stackrel{\alpha}{\rightarrow} k \stackrel{\beta}{\rightarrow} t$ in $A$ implies that there is $s \stackrel{\alpha}{\rightarrow} k \stackrel{d_{ij}\beta}{\rightarrow} t$ in $B$.
Therefore mutation at $k$ in $B$ creates $d_{ij}\alpha \beta $ arrows $s\rightarrow t$.
Now we can consider the multiplicity of bridging arrows resulting from cancellation of $2$-cycles mutation.
In $\mu_k(A)$ the multiplicity of the arrows from $s$ to $t$ is $\alpha \beta + \gamma$, where $\gamma$ is the number of arrows from $s$ to $t$ in $A$ (here we allow $\gamma$ to be negative if there are arrows from $t$ to $s$). 
In $\mu_k(B)$ the multiplicity of arrows from $s$ to $t$ is $d_{ij} \alpha \beta + d_{ij}\gamma $ since there are $d_{ij} \gamma$ arrows from $s$ to $t$ in $B$ by the assumption that $B$ $\pi$-dominates $A$. 
Therefore there are exactly $d_{ij}(\alpha \beta + \gamma)$ arrows from $s$ to $t$ in $\mu_k(B)$ which is exactly the condition needed to say that $\mu_k(B)$ $\pi$-dominates $\mu_k(A)$.

The case where $k$ is red is very similar.
In this case $t$ must be in the same component as $k$ because $k$ is component preserving.
The presence of $s\stackrel{\alpha}{\rightarrow} k \stackrel{\beta}{\rightarrow} t$ in $A$ now implies that there is $s \stackrel{d_{ij}\alpha}{\rightarrow} k \stackrel{\beta}{\rightarrow} t$ in $B$.
Again mutation at $k$ in $B$ creates $d_{ij}\alpha \beta $ arrows $s\rightarrow t$ and the rest of the argument follows the case where $k$ was green.
\end{proof}

We can now state our main result regarding dominance, that certain reddening sequences can be passed from a quiver $A$ to a $\pi$-dominating quiver $B$.

\begin{corollary}\label{dominancecor}
Let $(A,\pi)$ be a partitioned quiver, with $\pi=\pi_1/\pi_2/\dots / \pi_{\ell}$. Let $\sigma_1,\sigma_2, \dots , \sigma_{\ell}$ be reddening sequences for $A_1,A_2, \dots , A_{\ell}$ respectively. If $A$ admits a reddening sequence, $\tau$, which is a component preserving shuffle of $\sigma_1,\sigma_2, \dots , \sigma_{\ell}$ and $B$ $\pi$-dominates $A$, then $\tau$ is also a reddening sequence for $B$. Moreover, if $\tau$ is a maximal green sequence for $A$, then $\tau$ is a maximal green sequence for $B$. 
\end{corollary}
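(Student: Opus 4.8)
The plan is to run the single-step comparison of Theorem~\ref{dominancetheorem} along the entire sequence $\tau$, carrying $\pi$-dominance as an inductive invariant, and then to read off colors component by component using Lemma~\ref{mu} and Lemma~\ref{lemsign}. Since reddening and maximal green sequences live on framed quivers, I would first pass to the framed setting. Writing $\tau = (k_1, \dots, k_m)$, I would check that $\widehat{B}$ $\widehat{\pi}$-dominates $\widehat{A}$ with the same dominance constants $d_{ij}$: framing only introduces the within-component arrows $w \to w'$, so it creates no new bridging arrows, and since $A_i = B_i$ gives $\widehat{A_i} = \widehat{B_i}$, both clauses of Definition~\ref{def:pidom} survive framing unchanged.

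Next I would set $A^{(0)} = \widehat{A}$, $B^{(0)} = \widehat{B}$, and $A^{(t)} = \mu_{k_t}(A^{(t-1)})$, $B^{(t)} = \mu_{k_t}(B^{(t-1)})$, and prove by induction on $t$ the invariant that $B^{(t)}$ $\widehat{\pi}$-dominates $A^{(t)}$ and that $k_{t+1}$ is component preserving in $A^{(t)}$ (hence, by the opening line of the proof of Theorem~\ref{dominancetheorem}, also in $B^{(t)}$). The base case is the framing observation above together with the hypothesis that $\tau$ is a component preserving shuffle for $A$. For the inductive step, given that $k_t$ is component preserving in $A^{(t-1)}$ and $B^{(t-1)}$ $\widehat{\pi}$-dominates $A^{(t-1)}$, Theorem~\ref{dominancetheorem} yields that $k_t$ is component preserving in $B^{(t-1)}$ and that $B^{(t)}$ $\widehat{\pi}$-dominates $A^{(t)}$, while component-preservingness of $k_{t+1}$ in $A^{(t)}$ is exactly the statement that $\tau$ is component preserving for $\widehat{A}$. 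This is the step I expect to be the crux: one cannot simply assert that a sequence which is component preserving for $A$ is automatically component preserving for $B$, because this property is read off the current quiver at each step, so the dominance relation must be threaded through the induction to keep the bridging arrows pointing in the same direction and of positive multiplicity.

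Finally I would convert this into a statement about colors. From the invariant (equivalently, directly from Lemma~\ref{mu} together with $\widehat{A_i} = \widehat{B_i}$) the component subquivers agree at the end: $\mu_\tau(\widehat{B})_i = \mu_\tau(\widehat{A})_i$ for every $i$. Since $\tau$ is component preserving for both $\widehat{A}$ and $\widehat{B}$, Lemma~\ref{lemsign} says the color of a mutable vertex $v \in \pi_i$ in $\mu_\tau(\widehat{B})$ is determined by its color inside the component $\mu_\tau(\widehat{B})_i$, and likewise for $A$; as these components coincide, $v$ has the same color in $\mu_\tau(\widehat{B})$ as in $\mu_\tau(\widehat{A})$. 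Because $\tau$ is reddening for $A$, every such $v$ is red in $\mu_\tau(\widehat{A})$, hence red in $\mu_\tau(\widehat{B})$, so $\tau$ is a reddening sequence for $B$.

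For the maximal green refinement I would apply the same color-transfer argument at each intermediate step rather than only at the end. Using Lemma~\ref{lemsign} and Lemma~\ref{mu} together with the equality of components $(A^{(t-1)})_i = (B^{(t-1)})_i$ supplied by the induction, the mutation vertex $k_t$ has the same color in $B^{(t-1)}$ as in $A^{(t-1)}$. Thus if every mutation of $\tau$ occurs at a green vertex for $A$, the same holds for $B$, and $\tau$ is a maximal green sequence for $B$.
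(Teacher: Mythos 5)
Your proof is correct and follows essentially the same route as the paper: iterate Theorem~\ref{dominancetheorem} to carry $\pi$-dominance along $\tau$, conclude that $\tau$ is component preserving for $B$, and then use the equality of components $A_i = B_i$ to transfer the reddening/maximal green conclusion. The only difference is presentational: you make explicit the inductive invariant that the paper leaves implicit, and you re-derive the final color statement directly from Lemmas~\ref{mu} and~\ref{lemsign} where the paper simply cites Theorem~\ref{cor1} and Corollary~\ref{cor2}.
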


\begin{proof}
Theorem \ref{dominancetheorem} shows that each component preserving mutation in $A$ is also a component preserving mutation in $B$. Therefore the mutation sequence $\tau$ is a component preserving sequence for $B$ since it is a component preserving sequence for $A$. The definition of $\pi$-dominance tells us that $A_1=B_1,A_2=B_2,\dots , A_{\ell}=B_{\ell}$. Therefore since $\sigma_1, \sigma_2,\dots \sigma_{\ell}$ are reddening sequences for $A_1,A_2,\dots , A_{\ell}$, they are also reddening sequences for $B_1,B_2,\dots , B_{\ell}$. Then by Theorem~\ref{cor1} and Corollary~\ref{cor2} we have that they are in fact reddening sequences and additionally maximal green in the case where each $\sigma_i$ is a maximal green sequence. 
\end{proof}

\begin{figure}
    \begin{tikzpicture}
    \node (1) at (1,1) {$1$};
    \node (2) at (2,0) {$2$};
    \node (3) at (1,-1) {$3$};
    \node (4) at (-1,-1) {$4$};
    \node (5) at (-2,0) {$5$};
    \node (6) at (-1,1) {$6$};
    
    \draw[->] (1) to (2);
    \draw[->] (2) to (3);
    \draw[->] (3) to (4);
    \draw[->] (4) to (5);
    \draw[->] (5) to (6);
    \draw[->] (6) to (1);
    
    \node at (-1.75,-0.65) {$d$};
    \node at (-1.75,0.65) {$d$};
    \end{tikzpicture}
    \caption{A quiver dominating the cycle which has the maximal green sequence $(1,2,3,4,5,6,4,3,2,1)$.}
    \label{fig:dom_cyle}
\end{figure}
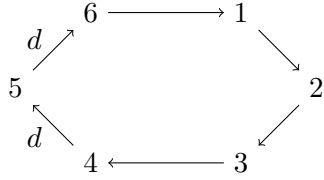

Now we are equipped to use $\pi$-dominance to produce reddening and maximal green sequences for the dominating quivers by having \emph{well behaved} sequences on the dominated quiver.
We conclude this section with a few examples each providing a family of applications of Corollary~\ref{dominancecor}.

\subsection{Examples of applying Corollary~\ref{dominancecor}}

Corollary~\ref{dominancecor} applies to any case where one can produce a maximal green or reddening seqeunce using component preserving mutations.
Thus, this result can be applied in many cases to produce infinite families of examples.
In this section we highlight a few examples.

\begin{example}[Dreaded Torus]
Previously much attention has been paid to maximal green sequences for finite mutation type quivers (see~\cite{MillsMGS}).
In Section~\ref{sec:torus} we saw one example of a maximal green sequence for a finite mutation type quiver using component preserving mutations.
Now we revisit this example, except we can scale the bridging arrows between the components and leave the case of finite mutation type.
By Corollary \ref{dominancecor} we know that the original maximal green sequence for the dreaded torus will also be a maximal green sequence for all $\pi$-dominating quivers. Therefore $(1,3,4,2,1,3)$ is a maximal green sequence for all of the quivers in Figure \ref{fig:dreaded-dominance}, where $a$ is a positive integer. This is an example of a quiver where the shuffle is not one that can be obtained from direct sum results as the partition does not form a direct sum of either the original quiver or the $\pi$-dominating quivers.

\begin{figure}
\begin{tikzpicture}
\node (1) at (0,0) {$1$};
\node (a) at (2.55,1.6) {$a$};
\node (b) at (1.2,1.5) {$2a$};
\node (c) at (.8,.6) {$a$};
\node (d) at (1.5,-.3) {$a$};
\node (2) at (3,0) {$2$};
\node (3) at (1.5,0.75) {$3$};
\node (4) at (1.5,3) {$4$};
\draw[-{latex}] (1) to (2);
\draw[-{latex}] (1) to (3);
\draw[-{latex}] (2) to (3);
\draw[-{latex}] (4) to (1);
\draw[-{latex}] (4) to (2);
\draw[-{latex}] (3) to (4);
\end{tikzpicture}
\caption{For each positive integer $a$, Corollary \ref{dominancecor} produces a maximal green sequence for the quiver, which was the maximal green sequence from the dreaded torus. The maximal green sequence is $(1,3,4,2,1,3)$.}
\label{fig:dreaded-dominance}
\end{figure}
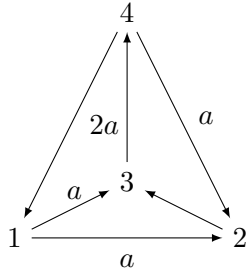
\end{example}

\begin{example}[The cycle]
Another example of finite mutation type quiver is the directed cycle quiver with vertex set $\{1,2,\dots, n\}$ and arrow set $\{i \to (i+1) : 1 \leq i < n\} \cup \{n \to 1\}$.
In~\cite[Lemma 4.2]{Bucher} it is shown this quiver has the maximal green sequence 
\[(1,2,\dots,n-2,n-1,n,n-2,n-3,\dots,2,1)\]
which can be seen to be component preserving with respect to the partition $\{1,2,\dots,n-3,n-2,n\}/\{n-1\}$.
By applying Corollary~\ref{dominancecor} we then obtain maximal green sequences for many quivers of infinite mutation type.
The case $n=6$ is shown in Figure~\ref{fig:dom_cyle}.
\end{example}

\begin{example}[$Q$-systems]
Consider Figure~\ref{fig:pidominancemgs} when $\alpha =2$ in which we can produce a maximal green sequence for the $Q$-system quiver of type $A_4$ by utilizing the maximal green sequence from the square product quiver of type $(A_2, A_4)$.
This technique also produces maximal green sequences for other $Q$-system quivers (see~\cite{Q1,Q2}) which are dominating quivers of square products.
The next section will focus on producing maximal green sequences for a variety of bipartite recurrent quivers.
\end{example}

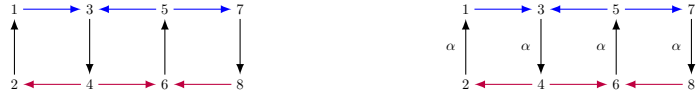
\begin{figure}
\begin{tikzpicture}
\node[scale=0.5] (1) at (-1,1) {1};
\node[scale=0.5] (2) at (-1,0) {2};
\node[scale=0.5] (4) at (0,1) {3};
\node[scale=0.5] (5) at (0,0) {4};
\node[scale=0.5] (7) at (1,1) {5};
\node[scale=0.5] (8) at (1,0) {6};
\node[scale=0.5] (10) at (2,1) {7};
\node[scale=0.5] (11) at (2,0) {8};

\draw[-{latex},blue] (1) to (4);
\draw[-{latex},blue] (7) to (4);
\draw[-{latex},blue] (7) to (10);
\draw[-{latex},purple] (5) to (2);
\draw[-{latex},purple] (5) to (8);
\draw[-{latex},purple] (11) to (8);
\draw[-{latex}] (2) to (1);
\draw[-{latex}] (4) to (5);
\draw[-{latex}] (8) to (7);
\draw[-{latex}] (10) to (11);

\node[scale=0.5] (19) at (5,1) {1};
\node[scale=0.5] (12) at (5,0) {2};
\node[scale=0.5] (14) at (6,1) {3};
\node[scale=0.5] (15) at (6,0) {4};
\node[scale=0.5] (17) at (7,1) {5};
\node[scale=0.5] (18) at (7,0) {6};
\node[scale=0.5] (110) at (8,1) {7};
\node[scale=0.5] (111) at (8,0) {8};

\node[scale=.5] at (4.8,.5) {$\alpha$};
\node[scale=.5] at (5.8,.5) {$\alpha$};
\node[scale=.5] at (6.8,.5) {$\alpha$};
\node[scale=.5] at (7.8,.5) {$\alpha$};

\draw[-{latex},blue] (19) to (14);
\draw[-{latex},blue] (17) to (14);
\draw[-{latex},blue] (17) to (110);
\draw[-{latex},purple] (15) to (12);
\draw[-{latex},purple] (15) to (18);
\draw[-{latex},purple] (111) to (18);

\draw[-{latex}] (12) to (19);
\draw[-{latex}] (14) to (15);
\draw[-{latex}] (18) to (17);
\draw[-{latex}] (110) to (111);

\end{tikzpicture}
\caption{This is $\pi$-dominance where the components are the horizontal rows of the quiver. The square product quiver on the left has a maximal green sequence compatible with a $\pi$ component preserving shuffle of $(2,3,6,7,1,4,5,8,2,3,6,7,1,4,5,8,2,3,6,7)$. Corollary \ref{dominancecor} shows that the quiver on the left where $\alpha$ is any positive integer admits the same maximal green sequence.}
\label{fig:pidominancemgs}
\end{figure}

\section{Bipartite recurrent quivers}\label{sec:bipartite}

In this section we consider certain quivers arising in the setting of $T$-systems and $Y$-systems.
An early application of cluster algebras was Fomin and Zelevinsky's proof of periodicity for $Y$-systems associated to root systems~\cite{FZYsystem} which was conjectured by Zamolodchikov~\cite{Zam}.
This has lead to many more applications of cluster algebra theory in periodicity for $T$-systems and $Y$-systems.
We will focus on work of Galashin and Pylyavskyy on bipartite recurrent quivers~\cite{GP1,GP2,GP3}.
For certain bipartite recurrent quivers we will produce maximal green sequences in Theorem~\ref{thm:recurrentMGS}.
An important ingredient in our constructions of maximal green sequences will be an extension of Stembridge's bigraphs~\cite{Stem}.
The pattern for the maximal green sequences produced in this section was originally observed by Keller in the case of square products~\cite{KellerPeriod}.
For a quantum field theory perspective on the results in this section we refer the reader to~\cite{PhysicsA} where some of the same mutation sequences we construct are also considered.
The main contribution of this section is to demonstrate how component preserving mutation neatly establishes the existence of a maximal green sequence for all quivers in Galashin and Pylyavskyy's classification of Zamolodchikov periodic quivers~\cite{GP1} as well as for some additional bipartite recurrent quivers.

We call a quiver $Q$ \emph{bipartite} if there exists a map $\epsilon: V(Q) \to \{0,1\}$ such that $\epsilon(i) \neq \epsilon(j)$ for every arrow $i \to j$ of $Q$.
The choice of such a map $\epsilon$ when it exists for a quiver $Q$ is called a \emph{bipartition}.
Given a bipartition $\epsilon$ for $Q$ a vertex $i \in V(Q)$ will be called \emph{white} if $\epsilon(i) = 0$ and $\emph{black}$ if $\epsilon(i) = 1$.
Let $i_1, i_2, \dots, i_{\ell}$ denote the white vertices and $Q$ and $j_1, j_2, \dots, j_m$ denote the black vertices.
We then let 
\[\mu_{\circ} = \mu_{i_1} \circ \mu_{i_2} \circ \cdots \circ \mu_{i_{\ell}}\]
and
\[\mu_{\bullet} = \mu_{j_1} \circ \mu_{j_2} \circ \cdots \circ \mu_{j_m}\]
denote the mutations at all white vertices or black vertices respectively.
Since the quiver is bipartite no white vertex is adjacent to any other white vertex and so the order of mutation among the white vertices in $\mu_{\circ}$ does not matter.
Similarly the order among the black vertices in $\mu_{\bullet}$ does not matter.
A bipartite quiver $Q$ is \emph{recurrent} if both $\mu_{\circ}(Q) = Q^{op}$ and $\mu_{\bullet}(Q) = Q^{op}$ where $Q^{op}$ denotes the quiver obtained from $Q$ by reserving the direction of all arrows.
Thus for a bipartite recurrent quiver we have $\mu_{\bullet}(\mu_{\circ}(Q)) = Q$ and $\mu_{\circ}(\mu_{\bullet}(Q)) = Q$.

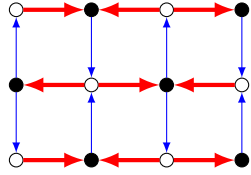
\begin{figure}
\begin{tikzpicture}
\node[draw, circle,fill=white,scale=0.5] (1) at (-1,1) {};
\node[draw, circle,fill=black,scale=0.5] (2) at (-1,0) {};
\node[draw, circle,fill=white,scale=0.5] (3) at (-1,-1) {};
\node[draw, circle,fill=black,scale=0.5] (4) at (0,1) {};
\node[draw, circle,fill=white,scale=0.5] (5) at (0,0) {};
\node[draw, circle,fill=black,scale=0.5] (6) at (0,-1) {};
\node[draw, circle,fill=white,scale=0.5] (7) at (1,1) {};
\node[draw, circle,fill=black,scale=0.5] (8) at (1,0) {};
\node[draw, circle,fill=white,scale=0.5] (9) at (1,-1) {};
\node[draw, circle,fill=black,scale=0.5] (10) at (2,1) {};
\node[draw, circle,fill=white,scale=0.5] (11) at (2,0) {};
\node[draw, circle,fill=black,scale=0.5] (12) at (2,-1) {};

\draw[-{latex},red,ultra thick] (1) to (4);
\draw[-{latex},red,ultra thick] (7) to (4);
\draw[-{latex},red,ultra thick] (7) to (10);
\draw[-{latex},red,ultra thick] (5) to (2);
\draw[-{latex},red,ultra thick] (5) to (8);
\draw[-{latex},red,ultra thick] (11) to (8);
\draw[-{latex},red,ultra thick] (3) to (6);
\draw[-{latex},red,ultra thick] (9) to (6);
\draw[-{latex},red,ultra thick] (9) to (12);

\draw[-{latex},blue] (2) to (3);
\draw[-{latex},blue] (2) to (1);
\draw[-{latex},blue] (4) to (5);
\draw[-{latex},blue] (6) to (5);
\draw[-{latex},blue] (8) to (7);
\draw[-{latex},blue] (8) to (9);
\draw[-{latex},blue] (10) to (11);
\draw[-{latex},blue] (12) to (11);
\end{tikzpicture}
\caption{An example of a bipartite recurrent quiver.}
\label{fig:square}
\end{figure}

A \emph{bigraph} is a pair $(\Gamma, \Delta)$ of undirected graphs on the same underlying vertex set with no edges in common.
Let $A_{\Gamma}$ and $A_{\Delta}$ denote the adjacency matrices of $\Gamma$ and $\Delta$ respectively.
Given any bipartite quiver $Q$ with bipartition $\epsilon$ we obtain a bigraph $(\Gamma(Q), \Delta(Q))$ on vertex set $V(Q)$ where $\Gamma(Q)$ has an edge $\{i,j\}$ for each arrow $i \to j$ in $Q$ with $\epsilon(i) = 0$ and $\Delta(Q)$ has an edge $\{i,j\}$ for each arrow $i \to j$ of $Q$ with $\epsilon(i) = 1$.
By abuse of notation we may also think of $\Gamma(Q)$ and $\Delta(Q)$ as directed graphs with the direction of edge inherited from the quiver.
Galashin and Pylyavskyy have shown that a bipartite quiver $Q$ is recurrent if and only if $A_{\Gamma(Q)}$ and $A_{\Delta(Q)}$ commute \cite[Corollary 2.3]{GP1}.
A bigraph $(\Gamma, \Delta)$ is called an \emph{admissible $ADE$ bigraph} if every component of both $\Gamma$ and $\Delta$ is an $ADE$ Dynkin diagram and the adjacency matrices of $\Gamma$ and $\Delta$ commute.
In the case of an admissible $ADE$ bigraph, each connected component of $\Gamma$, and similarly of $\Delta$, will be an $ADE$ Dynkin diagram will the same Coxter number~\cite[Corollary 4.4]{Stem}.
More generally, we wish to also consider what we will refer to as \emph{half-finite} bigraphs where for at least one of $\Gamma$ or $\Delta$ each connected component is a $ADE$ Dynkin diagram.
Note the half-finite case includes both the admissible $ADE$ bigraph case (which are exactly those quivers which are Zamolodchikov periodic~\cite{GP1}) as well as the \emph{affine $\boxtimes$ finite} case in the classification of Galashin and Pylyavskyy~\cite{GP2}.
An example of a bipartite recurrent quiver is shown in Figure~\ref{fig:square}.
Let $Q$ denote the bipartite recurrent quiver in Figure~\ref{fig:square}.
The edges of $\Gamma(Q)$ correspond to the thick red arrows while the edges of $\Delta(Q)$ correspond to the thin blue arrows.

For an $ADE$ Dynkin diagram $\Lambda$ we denote its Coxeter number by $h(\Lambda)$ and its number of positive roots by $|\Phi_+(\Lambda)|$.
These quantities will be important in the maximal green sequences we construct.
Table~\ref{tbl:hroots} shows the values for $h(\Lambda)$ and $|\Phi_+(\Lambda)|$ for each $ADE$ Dynkin diagram $\Lambda$.
We now present a result due to Galashin and Pylyavskyy generalizing the result for admissible $ADE$ bigraphs.

\begin{lemma}[{\cite[Corollary 1.1.9]{GP2}}]
If $(\Gamma, \Delta)$ is a half-finite bigraph so that each component of $\Gamma$ is an $ADE$ Dynkin diagram, then the Coxeter number of each component of $\Gamma$ will be the same.
\label{lem:h}
\end{lemma}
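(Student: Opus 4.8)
The plan is to run the argument entirely at the level of the two adjacency matrices, using Perron--Frobenius theory together with the fact that they commute. I will use one standard spectral input: if $\Lambda$ is an $ADE$ Dynkin diagram, the eigenvalues of its adjacency matrix are $2\cos(\pi m / h(\Lambda))$ as $m$ runs over the exponents of $\Lambda$. Since the smallest exponent is always $1$, the largest eigenvalue equals $2\cos(\pi/h(\Lambda))$, and as $t \mapsto 2\cos(\pi/t)$ is strictly increasing this top eigenvalue determines $h(\Lambda)$. Moreover each Dynkin diagram is connected, so by Perron--Frobenius its top eigenvalue is simple with a strictly positive eigenvector supported on that component.

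Write $A_{\Gamma}$ and $A_{\Delta}$ for the adjacency matrices of $\Gamma$ and $\Delta$; they are symmetric, entrywise nonnegative, and commute (the commuting property holds for the bigraphs of recurrent quivers, cf.\ \cite[Corollary 2.3]{GP1}). Enumerate the components of $\Gamma$ as $\Gamma_1, \dots, \Gamma_r$ with Coxeter numbers $h_k = h(\Gamma_k)$; since $A_{\Gamma}$ is block diagonal across these components, its largest eigenvalue $\lambda_{\max}$ is attained exactly by those $k$ with $h_k$ maximal (any smaller eigenvalue $2\cos(\pi m/h_k)$ with $m \geq 2$ is strictly below $\lambda_{\max}$ by injectivity of $\cos$ on $(0,\pi)$). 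Let $S = \{k : h_k \text{ is maximal}\}$, let $v_k$ be the positive Perron vector of $\Gamma_k$, and let $E$ be the $\lambda_{\max}$-eigenspace of $A_{\Gamma}$. By the previous paragraph $E = \Span\{v_k : k \in S\}$, so every vector in $E$ is supported on $W := \bigcup_{k \in S} V(\Gamma_k)$.

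The crux is to push commutativity through this picture. Since $A_{\Delta}$ is symmetric and commutes with $A_{\Gamma}$, it preserves each eigenspace of $A_{\Gamma}$, so $A_{\Delta} E \subseteq E$. I would test this on $w := \sum_{k \in S} v_k$, which is strictly positive on $W$ and zero elsewhere. Then $A_{\Delta} w \in E$ is supported on $W$, while $A_{\Delta} w \geq 0$ entrywise because $A_{\Delta} \geq 0$ and $w \geq 0$. For a vertex $i \notin W$ this yields $0 = (A_{\Delta} w)_i = \sum_{j \in W} (A_{\Delta})_{ij}\, w_j$, a sum of nonnegative terms with each $w_j > 0$, forcing $(A_{\Delta})_{ij} = 0$ for all $j \in W$. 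Hence no $\Delta$-edge connects $W$ to its complement, and by definition no $\Gamma$-edge does either, so $W$ is a union of connected components of $\Gamma \cup \Delta$.

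To finish I would invoke connectedness of the underlying bigraph: if $\Gamma \cup \Delta$ is connected then $W = V$, so $S = \{1, \dots, r\}$ and every component of $\Gamma$ attains the maximal Coxeter number, as claimed. The main obstacle is precisely the third step --- showing that $A_{\Delta}$ cannot transport the top Perron mass of $\Gamma$ off the maximal components --- which is exactly where commutativity and nonnegativity must be combined; everything else is bookkeeping. I would also be careful to record the standing connectedness hypothesis on $\Gamma \cup \Delta$ (equivalently, on the quiver), since for a disjoint union of bigraphs with differing Coxeter numbers the statement plainly fails.
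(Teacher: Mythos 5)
Your argument is correct, but note that the paper does not actually prove this lemma: it is imported wholesale as a citation to \cite[Corollary 1.1.9]{GP2}, so there is no in-paper proof to compare against line by line. What you have written is a legitimate self-contained proof, and it runs along the same spectral lines as the original arguments of Stembridge and Galashin--Pylyavskyy for statements of this type. The three ingredients all check out: (i) the eigenvalues of the adjacency matrix of an $ADE$ diagram $\Lambda$ are $2\cos(\pi m/h)$ over the exponents $m$, and since $1$ is always an exponent and every exponent satisfies $1 \leq m < h$, the spectral radius $2\cos(\pi/h)$ is a simple eigenvalue of each block and is strictly increasing in $h$, so the $\lambda_{\max}$-eigenspace $E$ of the block-diagonal matrix $A_{\Gamma}$ is exactly $\Span\{v_k : k \in S\}$ for the Perron vectors of the components of maximal Coxeter number; (ii) $A_{\Delta}$ commutes with $A_{\Gamma}$ (this is precisely \cite[Corollary 2.3]{GP1} for recurrent quivers and is built into the definition of the bigraphs under consideration), hence preserves $E$; (iii) applying $A_{\Delta} \geq 0$ to $w = \sum_{k \in S} v_k$ and reading off the zero entries outside $W$ forces $(A_{\Delta})_{ij} = 0$ for $i \notin W$, $j \in W$, so $W$ is closed under both $\Gamma$- and $\Delta$-edges. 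Note that only nonnegativity of $A_{\Delta}$ is used, never that $\Delta$ is $ADE$, which is exactly what the half-finite setting requires. Your closing caveat is also the right one: the conclusion genuinely needs $\Gamma \cup \Delta$ to be connected (the statement fails for a disjoint union of bigraphs with different Coxeter numbers), and this connectedness is a standing hypothesis in \cite{GP2} that the lemma as quoted in the paper leaves implicit.
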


If $Q$ is an orientation of an $ADE$ Dynkin diagram $\Gamma$, then the length of the longest possible maximal green sequence is $|\Phi_+(\Lambda)|$ which has been shown in~\cite[Theorem 4.4]{BDP} and~\cite[Proposition 7.3]{Qiu}.
A quiver $Q$ is an \emph{alternating orientation} of an $ADE$ Dynkin diagram $\Lambda$ if it is an orientation of $\Lambda$ so that every vertex is either a source or sink.
In the case we have an alternating orientation, we will be interested in a certain maximal green sequence of length $|\Phi_+(\Lambda)|$ coming from bipartite dynamics.
We may assume we have a bipartition of $Q$ such that all sinks are the white vertices and all sources are the black vertices.
The maximal green sequence in the following lemma was first observed by Keller~\cite{KellerSquare}.

\begin{lemma}[\cite{KellerSquare}]
Let $Q$ be an alternating orientation of an $ADE$ Dynkin diagram with Coxeter number $h$.
If $h = 2k$, then $(\mu_{\bullet} \mu_{\circ})^k$ is a maximal green sequence.
If $h = 2k+1$, then $\mu_{\circ}(\mu_{\bullet}\mu_{\circ})^k$ is a maximal green sequence.
\label{lem:ADEMGS}
\end{lemma}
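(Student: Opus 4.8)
The plan is to track the entire tuple of $c$-vectors through the mutation sequence and identify its evolution with the action of a bipartite Coxeter element on the root lattice of $\Lambda$. Identify $\mathbb{Z}^n$ with the root lattice so that in $\widehat{Q}$ the $c$-vector at vertex $i$ is the simple root $\alpha_i$, and recall that the positive roots $\Phi_+$ of $\Lambda$ satisfy $|\Phi_+| = nh/2$. Write $c_+ = \prod_{i \text{ white}} s_i$ and $c_- = \prod_{j \text{ black}} s_j$ for the two partial Coxeter involutions; each is a product of commuting simple reflections, since a color class is an independent set in a bipartite quiver. I would reduce the lemma to the single claim that, read as a sequence of individual mutations, the given sequence realizes a \emph{reduced word for the longest element $w_0$}, with the $c$-vectors mutated along the way forming its inversion sequence.

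The heart of the argument is a dictionary between a half-layer mutation and a partial Coxeter involution. Because $Q$ is bipartite, the white vertices are pairwise non-adjacent, so mutating them in any order does not affect one another's $c$-vectors; thus $\mu_\circ$ acts on the $c$-matrix as a single well-defined step, and likewise $\mu_\bullet$. I would then prove by induction on the number of half-layers performed that (i) the current $c$-vectors are $w(\alpha_1), \dots, w(\alpha_n)$, where $w$ is the product of the partial involutions applied so far read as a word, and (ii) this word is reduced. Sign-coherence of $c$-vectors, due to Derksen, Weyman, and Zelevinsky and already invoked in the text, guarantees each $c$-vector is $\pm$ a root; the inductive reducedness forces the mutated root $w'(\alpha_i)$ to be a positive root (the standard inversion-root property of reduced words), so the mutated vertex is green and the $c$-vector mutation rule collapses to the honest linear reflection $s_i$. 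This keeps the linear description valid and advances the reduced word by one letter, and non-adjacency within a color class lets a whole layer advance by the full factor $c_\pm$ at once.

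Finally I would invoke the classical fact that an alternating product of $h$ partial involutions is a reduced expression for $w_0$: concretely $w_0 = (c_+ c_-)^k$ when $h = 2k$, realized by the $h$ half-layers of $(\mu_\bullet\mu_\circ)^k$, and $w_0 = c_+(c_- c_+)^k$ when $h = 2k+1$, realized by the $h$ half-layers of $\mu_\circ(\mu_\bullet\mu_\circ)^k$. Among $ADE$ diagrams an odd Coxeter number occurs only for type $A_n$ with $n$ even, where the two color classes have equal size $n/2$; this is exactly what makes the half-layer count work, and in every case the total number of individual mutations is $nh/2 = |\Phi_+(\Lambda)|$. Since the concatenated word is reduced of length $|\Phi_+(\Lambda)|$, its inversion sequence lists each positive root exactly once, so every mutation is performed at a green vertex, and the terminal $c$-vectors are $w_0(\alpha_1), \dots, w_0(\alpha_n) \in \Phi_-$, so every mutable vertex is red. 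Hence the sequence is a maximal green sequence, of the maximal possible length $|\Phi_+(\Lambda)|$. The main obstacle is the dictionary step: making precise, using the sign-coherence input together with the non-adjacency of same-colored vertices, that the piecewise-linear $c$-vector mutation collapses to the linear reflection at every step and never mutates a red vertex; once that is nailed down the statement follows from the standard Coxeter identity expressing $w_0$ as an alternating product of the two partial involutions.
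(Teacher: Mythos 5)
The paper does not actually prove this lemma---it is quoted from Keller's work, so there is no internal proof to compare against. Your outline is the standard argument behind Keller's statement: identify the $c$-vector dynamics of the bipartite layers with the two partial Coxeter involutions $c_+$, $c_-$, invoke the classical fact that the alternating product of $h$ of them is a reduced word for $w_0$ of length $nh/2=|\Phi_+(\Lambda)|$, and read off greenness from the inversion-root property of reduced words. Two points need tightening. First, the order convention in your inductive claim (i) matters: the tuple of $c$-vectors after performing the layers $\mu_{\circ},\mu_{\bullet},\dots$ (in temporal order) is $c_+c_-\cdots(\alpha_j)$ with the \emph{earliest} layer as the \emph{leftmost} factor, i.e.\ new reflections are appended on the right. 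With the opposite convention the claim is already false for $A_2$: after $\mu_{\bullet}\mu_{\circ}$ applied to $1\to 2$ the $c$-vectors are $(-(\alpha_1+\alpha_2),\alpha_1)=(s_2s_1\alpha_1,s_2s_1\alpha_2)$, not $(s_1s_2\alpha_1,s_1s_2\alpha_2)$. Your phrasing is consistent with the correct reading, but the induction only closes with that reading.

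Second, and more substantively, greenness of the mutated vertex is \emph{not} by itself what makes the piecewise-linear $c$-vector mutation collapse to the linear reflection, so the justification you give for the dictionary step would fail as stated. Mutating $1\to 2$ at the green source $1$ sends the $c$-vectors $(\alpha_1,\alpha_2)$ to $(-\alpha_1,\alpha_2)$, not to $(s_1\alpha_1,s_1\alpha_2)=(-\alpha_1,\alpha_1+\alpha_2)$. The coefficient $[b_{jk}]_+$ in the $c$-vector recursion equals the full edge multiplicity (as the reflection formula requires) only when the arrows at the mutated vertex have the correct orientation, i.e.\ when that vertex is a sink of the current mutable quiver. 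This is the ingredient your sketch omits, and it is exactly what bipartite recurrence supplies: with the paper's convention the white vertices are sinks, $\mu_{\circ}(Q)=Q^{op}$ turns the black vertices into sinks, and $\mu_{\bullet}(Q^{op})=Q$ restores the situation, so every individual mutation in the sequence is performed at a green sink and therefore does linearize. Once that is added, the rest of your argument---reducedness of prefixes, $w_0(\alpha_j)\in\Phi_-$ at the end, total length $|\Phi_+(\Lambda)|$, and the equal-color-class count needed in the odd case $A_{2m}$---is correct.
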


\begin{table}
\[
\begin{array}{|c|c|c|c|c|c|}
\hline
\Lambda & A_n & D_n & E_6 & E_7 & E_8  \\ \hline
h(\Lambda) & n+1 & 2n-2 & 12 & 18 & 30 \\ \hline 
|\Phi_+(\Lambda)| & \binom{n+1}{2} & n^2 - n & 36 & 63 & 120 \\ \hline
\end{array}
\]
\caption{Coxeter numbers and number of positive roots for $ADE$ types.}
\label{tbl:hroots}
\end{table}

We are ready to state and prove our theorem which gives a maximal green sequence for any half-finite bipartite recurrent quiver.
Notice the assumption that $\Gamma(Q)$ consists of connected components which are all $ADE$ Dynkin diagrams can easily be exchanged for the assumption that $\Delta(Q)$ consists of connected components which are all $ADE$ Dynkin diagrams.
Also the assumption on white vertices is only to allow us to explicitly state the maximal green sequences.
An easy modification gives the correct statement of the theorem with the roles of black and white vertices reversed.

\begin{theorem}
Let $Q$ be a half-finite bipartite recurrent quiver.
Assume that $\Gamma(Q)$ consists of connected components which are all $ADE$ Dynkin diagrams.
Further assume that with the orientation induced by $Q$ the white vertices are sinks in $\Gamma(Q)$ and sources is $\Delta(Q)$.
Let $h$ be the Coxeter number of some component of $\Gamma(Q)$.
If $h = 2k$ is even, then $(\mu_{\bullet} \mu_{\circ})^k$ is a maximal green sequence of $Q$.
If $h = 2k+1$ is odd, then $\mu_{\circ}(\mu_{\bullet}\mu_{\circ})^k$ is a maximal green sequence of $Q$.
\label{thm:recurrentMGS}
\end{theorem}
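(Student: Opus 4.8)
The plan is to realize the stated bipartite sequence as a component preserving shuffle and then invoke Corollary~\ref{cor:CPMGS}. First I would take the partition $\pi$ of $V(Q)$ whose blocks are the vertex sets of the connected components of $\Gamma(Q)$. The first structural point to pin down is that this is the correct partition: each block induces an alternating orientation of an $ADE$ Dynkin diagram (the corresponding component of $\Gamma(Q)$, with white vertices as sinks and black vertices as sources), and every edge of $\Delta(Q)$ joins two distinct blocks, so that the bridging arrows of $(Q,\pi)$ are exactly the $\Delta$-arrows. The content here is that no $\Delta$-edge lies inside a single $\Gamma$-component; this orthogonality is where the commuting of $A_{\Gamma(Q)}$ and $A_{\Delta(Q)}$ (equivalently, recurrence) is used, and it is what makes each component quiver an honest $ADE$ quiver. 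By Lemma~\ref{lem:h} all of these components share the single Coxeter number $h$, so the local maximal green sequences will all have the same shape.

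Next, on each component Lemma~\ref{lem:ADEMGS} provides a maximal green sequence, namely $(\mu_\bullet \mu_\circ)^k$ when $h = 2k$ and $\mu_\circ (\mu_\bullet \mu_\circ)^k$ when $h = 2k+1$, where $\mu_\circ$ and $\mu_\bullet$ here mean mutation at the white, resp. black, vertices of that component. Because white and black vertices are non-adjacent within each component and the components are vertex-disjoint, the global sequence $(\mu_\bullet \mu_\circ)^k$ (resp. $\mu_\circ(\mu_\bullet\mu_\circ)^k$) for $Q$ restricts on each block to exactly the local sequence of Lemma~\ref{lem:ADEMGS}; that is, $\tau$ is a shuffle of the $\sigma_i$. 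It then remains only to check that every single-vertex mutation occurring in $\tau$ is component preserving, for then Corollary~\ref{cor:CPMGS} immediately yields that $\tau$ is a maximal green sequence of $\widehat{Q}$.

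The heart of the argument, and the step I expect to be the main obstacle, is this component preserving check, which amounts to controlling the bridging ($\Delta$) arrows through the bipartite dynamics. Since each $\sigma_i$ is a maximal green sequence, every vertex mutated is green in its own component, hence green in $\widehat{Q}$ by Lemma~\ref{lemsign}; for a green vertex the condition of Definition~\ref{def:CP} reduces to the absence of incoming bridging arrows. I would therefore maintain the invariant that at the start of each half-step the vertices about to be mutated carry only outgoing bridging arrows: initially the $\Delta$-arrows point white$\to$black, and each completed half-step reverses all bridging arrows, so after $\mu_\circ$ they point black$\to$white (out of the black vertices that $\mu_\bullet$ mutates next) and after $\mu_\bullet$ they point white$\to$black again. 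Two things must be verified to make this rigorous: that the individual white mutations inside a single $\mu_\circ$ create no incoming arrow at a not-yet-mutated white vertex (they only create arrows among black vertices and toward frozen copies within a single block, so a remaining white keeps its incoming arrows confined to $\Gamma(Q)$, i.e. to its own block), and that the transient same-colour bridging arrows produced mid-pass vanish by the end of the pass. The latter is exactly guaranteed by recurrence: $\mu_\circ(Q) = Q^{op}$ and $\mu_\bullet(Q^{op}) = Q$ force the mutable part to return to a bipartite quiver carrying only $\Gamma$- and $\Delta$-arrows after each completed half-step, so no diagonal bridging arrows survive and the bridging arrows are merely reversed. Granting the invariant, every mutated vertex is green with no incoming bridging arrow, hence component preserving, which completes the reduction to Corollary~\ref{cor:CPMGS}. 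The odd case $h = 2k+1$ is handled identically, the extra leading $\mu_\circ$ being one further white half-step of the same type.
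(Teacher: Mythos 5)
Your proposal is correct and follows essentially the same route as the paper's proof: partition by the connected components of $\Gamma(Q)$, invoke Lemma~\ref{lem:h} and Lemma~\ref{lem:ADEMGS} for the local maximal green sequences, verify the component preserving condition by tracking the direction of the bridging ($\Delta$) arrows through each half-step using recurrence, and conclude via Corollary~\ref{cor:CPMGS}. You spell out the invariant (outgoing bridging arrows at each half-step, cancellation of transient same-colour bridging arrows, and the fact that no $\Delta$-edge lies inside a $\Gamma$-component) more explicitly than the paper does, but the argument is the same.
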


\begin{proof}
We will construct a maximal green sequence for $Q$ via component preserving mutations where components are given by the connected components of $\Gamma(Q)$.
By construction within each component every vertex will be either a source or sink.
Under our assumptions white vertices are initially sinks while black vertices are initially sources within each component.
Since $Q$ is a bipartite recurrent quiver $\mu_{\circ}(Q) = Q^{op}$ and $\mu_{\bullet}(\mu_{\circ}(Q)) = Q$.
Initially, mutation at any white vertex will be component preserving as each white vertex is a sink within its component and thus all arrows to other components will be outgoing.
Mutation at a given white vertex will not change the fact another white vertex is component preserving.
For the same reason mutation at any black vertex is component preserving in $Q^{op}$.
It follows that $(\mu_{\bullet} \mu_{\circ})^m$ and $\mu_{\circ}(\mu_{\bullet}\mu_{\circ})^m$ are component preserving sequences of mutations for any $m$.
By Lemma~\ref{lem:h} each component has the same Coxeter number.
Lemma~\ref{lem:ADEMGS} says that we do indeed have maximal green sequences on each component and therefore the theorem is proven by appealing to Corollary~\ref{cor:CPMGS}.
\end{proof}

\section{Other applications}\label{sec:other}
In this section we provide a variety of uses of the technique of component preserving mutations.

\subsection{Quantum dilogarithms}
We will review Keller's~\cite{KellerQdilog} association of a product of quantum dilogarithms with a sequence of mutations.
We will then consider properties of such products of quantum dilogarithms which come from component preserving mutations.
Let $q^{\frac{1}{2}}$ be an indeterminant.
We define the \emph{quantum dilogarithm} as
\[
\mathbb{E}(y) = 1 + \frac{q^{\frac{1}{2}}y}{q-1} + \cdots + \frac{q^{\frac{n^2}{2}}y^n}{(q^n - 1)(q^n - q) \cdots (q^n - q^{n-1})} + \cdots \]
which is consider as an element of the power series ring $\mathbb{Q}(q^{\frac{1}{2}})[[y]]$.
Keller has shown how reddening sequences give identities of quantum dilogarithms in a certain quantum algebra determined by a quiver.

Given a quiver $Q$ with vertex set $V$ and skew-symmetric adjacency matrix $B = (b_{uv})$ we obtain a lattice $\Lambda = \mathbb{Z}^V$ with basis $\{e_v\}_{v \in V}$.
There is a skew-symmetric bilinear form $\lambda: \Lambda \times \Lambda \to \mathbb{Z}$ defined by
\[\lambda(e_u, e_v) := b_{uv}.\]
The \emph{completed quantum algebra}  of the quiver $Q$, denoted by $\widehat{\mathbb{A}}_Q$, is then the noncommutative power series ring modulo relations defined as
\[\widehat{\mathbb{A}}_Q := \mathbb{Q}(q^{\frac{1}{2}})\langle\langle y^{\alpha}, \alpha \in \Lambda: y^{\alpha}y^{\beta} = q^{\frac{1}{2}\lambda(\alpha, \beta)}y^{\alpha+\beta}\rangle\rangle.\]
For any sequence  $\sigma = (i_1, i_2, \dots, i_N)$ of vertices in $Q$ we define
\[Q_{\sigma, t} := \mu_{i_t} \circ \mu_{i_{t-1}} \circ \cdots \circ \mu_{i_1} (Q)\]
for $0 \leq t \leq N$ where $Q_{\sigma, 0} = Q$.
We then define the product  $\mathbb{E}_{Q,\sigma} \in \widehat{\mathbb{A}}_Q$ as
\[\mathbb{E}_{Q,\sigma} := \mathbb{E}(y^{\epsilon_1 \beta_1})^{\epsilon_1}\mathbb{E}(y^{\epsilon_2 \beta_2})^{\epsilon_2} \cdots \mathbb{E}(y^{\epsilon_N \beta_N})^{\epsilon_N}\]
where $\beta_t$ is the $c$-vector corresponding to vertex $i_t$ in $Q_{\sigma, t-1}$ and $\epsilon_t \in \{\pm 1\}$ is the common sign on the entries of $\beta_t$.
If $\sigma$ is a reddening sequence, then $\mathbb{E}_{Q,\sigma}$ is known as the \emph{combinatorial Donaldson-Thomas invariant} of the quiver $Q$.
If $\sigma$ and $\sigma'$ are two reddening sequences, then we have the quantum dilogarithm identity $\mathbb{E}_{Q,\sigma} = \mathbb{E}_{Q,\sigma'}$~\cite[Theorem 6.5]{KellerSquare}.

\begin{figure}
\begin{tikzpicture}
\node (1) at (-1,0) {$1$};
\node (2) at (0,0) {$2$};
\node (3) at (1,0) {$3$};
\draw[-{latex}] (2) to (1);
\draw[-{latex}] (2) to (3);
\end{tikzpicture}
\caption{An alternating orientation of the Dynkin diagram $A_3$.}
\label{fig:altA3}
\end{figure}

In the case that $\alpha = \sum_{i \in I} e_i$ where $I = \{i_1, i_2, \dots, i_{\ell}\}$ we may write $y_{i_1i_2\cdots i_{\ell}\\}$ in place of $y^{\alpha}$.
Using this abbreviated notation, the well known \emph{pentagon identity} is 
\begin{equation}
\E(y_1)\E(y_2) = \E(y_2)\E(y_{12})\E(y_1)
\label{eq:penta}
\end{equation}
and can be seen by looking at the two maximal green sequences for the quiver $Q = (1 \to 2)$.
Now consider the quiver in Figure~\ref{fig:altA3} which is an alternating orientation of the Dynkin diagram $A_3$.
The two maximal green sequences
\[(2,1,3)\]
and
\[(1,3,2,1,3,2)\]
give the quantum dilogarithm identity
\begin{equation}
\E(y_2)\E(y_1)\E(y_3) = \E(y_1)\E(y_3)\E(y_{123})\E(y_{23})\E(y_{12})\E(y_2).
\label{eq:altA3}
\end{equation}
Reineke~\cite{Rei} has given quantum dilogarithm identities associated to any alternating orientation of an $ADE$ Dynkin diagram which generalize Equations~(\ref{eq:penta}) and~(\ref{eq:altA3}).
Using cluster algebra theory, Keller~\cite{KellerSquare} has further generalized these identities to square products associated to pairs of $ADE$ Dynkin diagrams.
Even more general identities follow from Theorem~\ref{thm:recurrentMGS} since we have now produced two maximal green sequences for any Zamolodchikov periodic quiver. 

Let us give a few properties of quantum dilogarithm products coming from component preserving mutations.
For $\alpha = \sum_i a_i e_i \in \Lambda$ we define its \emph{support} to be $\Supp(\alpha) := \{i  : a_i \neq 0\}$.
Consider a quiver $Q$, a subset of vertices $C \subseteq V(Q)$, and a sequence of vertices $\sigma = (i_1, i_2, \dots, i_N)$.
Define $\sigma|_C$ to be the restriction of $\sigma$ to $C$ (i.e. $\sigma$ where all vertices not in $C$ have been deleted).
Again write 
\[\mathbb{E}_{Q,\sigma} = \mathbb{E}(y^{\epsilon_1 \beta_1})^{\epsilon_1}\mathbb{E}(y^{\epsilon_2 \beta_2})^{\epsilon_2} \cdots \mathbb{E}(y^{\epsilon_N \beta_N})^{\epsilon_N}\]
and define $(\E_{Q, \sigma})|_C$ to be the product $\E_{Q, \sigma}$ (taken in the same order) with the terms $\mathbb{E}(y^{\epsilon_t \beta_t})^{\epsilon_t}$ removed whenever $i_t \not\in C$.
We now provide a proposition which tells us that when a reddening sequence of component preserving mutations is performed, there is a restriction on the support of the $c$-vectors occurring in the combinatorial Donaldson-Thomas invariant.
The proposition follows readily from the definitions and Remark~\ref{support}.
When $\pi$ is a set partition of a set $X$ and $x \in X$ is an element of that set, we will use $\pi(x)$ to denote the block of the set partition $\pi$ which contains $x$.

\begin{proposition}
Let $(Q, \pi)$ be a partitioned quiver so that $\sigma = (i_1, i_2, \dots, i_N)$ is a component preserving sequence of vertices.
If $C = Q_j$ is some component, then $\mathbb{E}_{Q,\sigma|_C} = (\E_{Q, \sigma})|_C$.
Moreover, we have that $\Supp(\beta_t) \subseteq \pi(i_t)$ for each $1 \leq t \leq N$.
\label{prop:support}
\end{proposition}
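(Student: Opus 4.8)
The plan is to handle the two assertions in the reverse of the stated order, since the support bound is the engine that drives the factorization identity. Throughout I track the framed quiver $\widehat{Q}$ together with $\widehat{\pi}$, as each $c$-vector $\beta_t$ is read off from arrows to frozen vertices. For the \emph{moreover} statement, fix $t$ and observe that $Q_{\sigma,t-1} = \mu_{i_{t-1}} \circ \cdots \circ \mu_{i_1}(\widehat{Q})$ is the result of applying the prefix $(i_1,\dots,i_{t-1})$ of a component preserving sequence to a framed quiver. Remark~\ref{support} then guarantees that every arrow of $Q_{\sigma,t-1}$ meeting a frozen vertex joins a mutable vertex to a frozen vertex of the \emph{same} component. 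The $j$-th entry of $\beta_t$ is the signed number of arrows between $i_t$ and the frozen vertex $j'$; such an arrow can occur only if $i_t$ and $j'$ share a component of $\widehat{\pi}$, that is only if $j \in \pi(i_t)$. Hence $\Supp(\beta_t) \subseteq \pi(i_t)$.

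For the factorization identity, set $C = Q_j$ and let $t_1 < \cdots < t_r$ be the indices with $i_{t_s} \in C$, so that $\sigma|_C = (i_{t_1},\dots,i_{t_r})$. First I would prove the structural fact that restricting to component $j$ commutes with running the sequence. Applying Lemma~\ref{mu} one step at a time: at a step with $i_t \notin \pi_j$ the restriction $(Q_{\sigma,t})_j$ is unchanged (Case Two), while at a step with $i_t \in \pi_j$ it is mutated at $i_t$ (Case One). Each $i_t$ is genuinely component preserving in the current quiver because $\sigma$ is a component preserving sequence, so Lemma~\ref{mu} applies at every step. Iterating and using $(\widehat{Q})_j = \widehat{Q_j}$ yields $(Q_{\sigma, t_s-1})_j = Q_{\sigma|_C,\, s-1}$ for each $s$, where $Q_{\sigma|_C,\,s-1} := \mu_{i_{t_{s-1}}} \circ \cdots \circ \mu_{i_{t_1}}(\widehat{Q_j})$.

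I would then combine the two ingredients. By the support bound, $\beta_{t_s}$ is supported on $\pi_j$ and, by definition, is determined by the arrows joining $i_{t_s}$ to frozen vertices, all of which lie in component $j$; thus $\beta_{t_s}$ is computed entirely inside the restriction $(Q_{\sigma,t_s-1})_j$. Let $\beta'_s$ and $\epsilon'_s$ denote the $c$-vector of $i_{t_s}$ and its sign in $Q_{\sigma|_C,\,s-1}$, so that the $s$-th factor of $\E_{Q,\sigma|_C}$ is $\E(y^{\epsilon'_s\beta'_s})^{\epsilon'_s}$. Substituting the structural identity $(Q_{\sigma,t_s-1})_j = Q_{\sigma|_C,\,s-1}$ shows $\beta_{t_s} = \beta'_s$ and $\epsilon_{t_s} = \epsilon'_s$. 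Therefore the surviving factor $\E(y^{\epsilon_{t_s}\beta_{t_s}})^{\epsilon_{t_s}}$ of $(\E_{Q,\sigma})|_C$ coincides with the $s$-th factor of $\E_{Q,\sigma|_C}$; since both products list their factors in the same order, the two are equal.

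The only place I expect real friction is the identification of $c$-vectors across the two computations: a priori $\beta_{t_s}$ lives in $\Lambda = \mathbb{Z}^{V(Q)}$, whereas the restricted sequence naturally produces a vector indexed by $V(Q_j)$. The support bound is exactly what reconciles them, forcing the coordinates of $\beta_{t_s}$ outside $\pi_j$ to vanish and certifying that the mutations at vertices outside $C$ never create frozen arrows reaching into component $j$. Once one accepts that a vertex's $c$-vector depends only on the frozen arrows inside its own component, the remaining bookkeeping — matching the two restriction operations, the orders, and the signs — is routine.
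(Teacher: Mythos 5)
Your argument is correct and follows exactly the route the paper intends: the paper dispatches this proposition by saying it ``follows readily from the definitions and Remark~\ref{support},'' and your write-up is a careful expansion of precisely that, deriving the support bound from Remark~\ref{support} applied to each prefix of $\sigma$ and the factorization from an iterated application of Lemma~\ref{mu} to identify $(Q_{\sigma,t_s-1})_j$ with $Q_{\sigma|_C,\,s-1}$. Your closing observation that the support bound is what lets one identify the $c$-vector in $\Lambda=\mathbb{Z}^{V(Q)}$ with the one computed in $\mathbb{Z}^{V(Q_j)}$ is the right point to flag, and is handled correctly.
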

When $Q$ is such that $(\Gamma(Q), \Delta(Q))$ is an admissible $ADE$ bigraph we can obtain a second maximal green sequence from Theorem~\ref{thm:recurrentMGS} by exchanging the roles of $\Gamma(Q)$ and $\Delta(Q)$.
A square product of two $ADE$ Dynkin diagrams produces a quiver $Q$ such that $(\Gamma(Q), \Delta(Q))$ is an admissible $ADE$ bigraph.
For square products of $ADE$ Dynkin diagrams Keller~\cite{KellerSquare} has previously produced the maximal green sequences in Theorem~\ref{thm:recurrentMGS}.
The square product of $A_3$ and $A_4$ is shown in Figure~\ref{fig:square}.
Stembridge's classification~\cite{Stem} of admissible $ADE$ bigraphs includes more than just those bigraphs encoding square products of $ADE$ Dynkin diagrams.
Thus, Theorem~\ref{thm:recurrentMGS} provides new quantum dilogarithm identites which can be thought of as generalizations of the pentagon identity.
An infinite family examples of quivers which are not square products are the \emph{twists} of an $ADE$ Dynkin diagrams~\cite[Example 1.4]{Stem}.
The quiver $Q$ which is the twist of $A_3$ is shown in Figure~\ref{fig:twist}.
On the left of Figure~\ref{fig:twist} the quiver is pictured to indicated the bigraph $(\Gamma(Q), \Delta(Q))$, and on the right we show the quiver with vertex labels.
The two expressions of the combinatorial Donaldson-Thomas invariant of $Q$ obtain from the maximal green sequences constructed in Theorem~\ref{thm:recurrentMGS} are
\begin{equation}
\E(y_1)\E(y_3)\E(y_4)\E(y_6)\E(y_{123})\E(y_{456})\E(y_{23})\E(y_{12})\E(y_{56})\E(y_{45})\E(y_2)\E(y_4)
\label{eq:dilog1}
\end{equation}
and
\begin{equation}
\E(y_2)\E(y_5)\E(y_{15})\E(y_{35})\E(y_{24})\E(y_{26})\E(y_{246})\E(y_{135})\E(y_1)\E(y_3)\E(y_4)\E(y_6).
\label{eq:dilog2}
\end{equation}
These expressions are equal and give one example of the quantum dilogarithm identities obtained from Theorem~\ref{thm:recurrentMGS}.
Looking at supports we can verify Proposition~\ref{prop:support} in this example.
Expression~(\ref{eq:dilog1}) comes from considering $\{1,2,3\}$ and $\{4,5,6\}$ as components while Expression~(\ref{eq:dilog2}) comes from considering $\{1,3,5\}$ and $\{2,4,6\}$ as components.
The maximal green sequences corresponding to the  products of quantum dilogarithms in Equations~(\ref{eq:dilog1}) and~(\ref{eq:dilog2}) are
\[(1,3,4,6,2,5,1,3,4,6,2,5)\]
and
\[(2,5,1,3,4,6,2,5,1,3,4,6)\]
respectively.

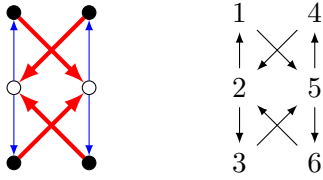
\begin{figure}
\begin{tikzpicture}
\node[draw, circle,fill=black,scale=0.5] (1) at (-0.5,1) {};
\node[draw, circle,fill=white,scale=0.5] (2) at (-0.5,0) {};
\node[draw, circle,fill=black,scale=0.5] (3) at (-0.5,-1) {};
\node[draw, circle,fill=black,scale=0.5] (4) at (0.5,1) {};
\node[draw, circle,fill=white,scale=0.5] (5) at (0.5,0) {};
\node[draw, circle,fill=black,scale=0.5] (6) at (0.5,-1) {};

\draw[-{latex}, blue] (2) to (1);
\draw[-{latex}, blue] (2) to (3);
\draw[-{latex}, blue] (5) to (4);
\draw[-{latex}, blue] (5) to (6);
\draw[-{latex}, red, ultra thick] (4) to (2);
\draw[-{latex}, red, ultra thick] (6) to (2);
\draw[-{latex}, red, ultra thick] (1) to (5);
\draw[-{latex}, red, ultra thick] (3) to (5);

\node (1) at (2.5,1) {$1$};
\node (2) at (2.5,0) {$2$};
\node (3) at (2.5,-1) {$3$};
\node (4) at (3.5,1) {$4$};
\node (5) at (3.5,0) {$5$};
\node (6) at (3.5,-1) {$6$};

\draw[-{latex}] (2) to (1);
\draw[-{latex}] (2) to (3);
\draw[-{latex}] (5) to (4);
\draw[-{latex}] (5) to (6);
\draw[-{latex}] (4) to (2);
\draw[-{latex}] (6) to (2);
\draw[-{latex}] (1) to (5);
\draw[-{latex}] (3) to (5);
\end{tikzpicture}
\caption{The quiver obtained from the twist of $A_3$.}
\label{fig:twist}
\end{figure}

\subsection{Minimal length maximal green sequences}
There has been recent interest in finding maximal green sequences of minimal possible length for a given quiver~\cite{CDRS,GMS}.
We will now show how minimal length maximal green sequences can be constructed with component preserving mutations.
In additional to being a natural question to ask about maximal green sequences, it has been observed by Garver, McConville, and Serhiyenko that the minimal possible length of a maximal green sequence may be related to derived equivalence of cluster tilted algebras (see ~\cite[Question 10.1]{GMS}).
The following result is a component preserving generalization of~\cite[Proposition 4.4]{GMS} which considers the direct sum case.

\begin{lemma}
Let $(Q, \pi)$ be a partitioned quiver with $\pi = \pi_1 / \pi_2 / \cdots / \pi_{\ell}$.
Also let $\sigma_i$ be a minimal length maximal green sequence for $Q_i$ for each $1 \leq i \leq \ell$.
If $\tau$ is a component preserving shuffle of $\sigma_1, \sigma_2, \dots, \sigma_n$, then $\tau$ is a minimal length maximal green sequence for $Q$.
\label{lem:min}
\end{lemma}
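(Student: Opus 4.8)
The plan is to prove two things: that $\tau$ is a maximal green sequence for $Q$ of length $\sum_{i=1}^{\ell} |\sigma_i|$, and that no maximal green sequence for $Q$ can be shorter. The first half is immediate from the machinery already developed. Since $\tau$ is a component preserving shuffle of the $\sigma_i$, Corollary~\ref{cor:CPMGS} shows $\tau$ is a maximal green sequence for $Q$, and by the definition of a shuffle its entries are the multiset union of the entries of the $\sigma_i$, so $|\tau| = \sum_{i=1}^{\ell} |\sigma_i|$. Everything therefore reduces to the lower bound on the length of an arbitrary maximal green sequence.

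For the lower bound I would take an \emph{arbitrary} maximal green sequence $\rho$ for $Q$ and bound $|\rho|$ from below by $\sum_{i=1}^{\ell} |\sigma_i|$. The key input is the fact that maximal green sequences restrict to induced subquivers: if $\rho$ is a maximal green sequence for $Q$ and $Q_i = Q|_{\pi_i}$ is the induced subquiver on the block $\pi_i$, then the restriction $\rho|_{\pi_i}$, obtained by deleting every entry of $\rho$ not lying in $\pi_i$, is again a maximal green sequence for $Q_i$. This is precisely the restriction property underlying the fact that induced subquivers inherit maximal green sequences~\cite{MullerEJC}, applied here to each component.

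Granting this, the argument closes quickly. Because $\sigma_i$ is a \emph{minimal length} maximal green sequence for $Q_i$, we obtain $|\rho|_{\pi_i}| \geq |\sigma_i|$ for each $i$. Since $\pi = \pi_1/\pi_2/\cdots/\pi_{\ell}$ partitions $V(Q)$, every entry of $\rho$ lies in exactly one block, so the lengths of the restrictions sum to the total length of $\rho$, giving $|\rho| = \sum_{i=1}^{\ell} |\rho|_{\pi_i}| \geq \sum_{i=1}^{\ell} |\sigma_i| = |\tau|$. Hence no maximal green sequence for $Q$ is shorter than $\tau$, and $\tau$ is of minimal length.

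The main obstacle is the restriction step. One must verify that deleting from an arbitrary $\rho$ all mutations at vertices outside $\pi_i$ still yields a valid maximal green sequence for $Q_i$, and the difficulty is that $\rho$ itself need not be component preserving, so we cannot appeal to Lemma~\ref{mu} or Lemma~\ref{lemsign} to control how mutations at vertices of other blocks interact with $Q_i$. This restriction theorem for maximal green sequences is the only ingredient not supplied directly by the component preserving framework of Section~\ref{sec:CPdef}, and I would isolate it as the crux of the argument while applying the results of that section only to the specific sequence $\tau$, whose restriction to $\pi_i$ is by construction the component maximal green sequence $\sigma_i$.
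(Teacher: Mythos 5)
Your overall strategy is exactly the paper's: use Corollary~\ref{cor:CPMGS} to see that $\tau$ is a maximal green sequence of length $\sum_i |\sigma_i|$, then bound an arbitrary maximal green sequence $\rho$ from below by counting, block by block, how many times $\rho$ mutates inside each $\pi_i$. However, the statement you lean on for the lower bound is false as you have written it. It is \emph{not} true that deleting from $\rho$ every entry outside $\pi_i$ yields a maximal green sequence for $Q_i$. Already for $Q = (1 \to 2)$ and the block $\{2\}$, the maximal green sequence $(2,1,2)$ restricts to $(2,2)$, which is not a maximal green sequence of the one-vertex quiver (after the first mutation the vertex is red, so the second mutation is not green, and the sequence is too long besides). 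The correct restriction theorem --- the one the paper cites as \cite[Theorem 3.3]{GMS}, in the spirit of \cite{MullerEJC} --- is weaker: the restriction $\rho|_{\pi_i}$ \emph{contains a subsequence} which is a maximal green sequence of $Q_i$.

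Fortunately this weaker statement is all your argument needs: it still gives $|\rho|_{\pi_i}| \geq |\sigma_i|$ because $\sigma_i$ has minimal length among maximal green sequences of $Q_i$, and since $\pi$ partitions $V(Q)$ the identity $|\rho| = \sum_i |\rho|_{\pi_i}|$ is unaffected. So the proof is repaired simply by replacing "the restriction is a maximal green sequence" with "the restriction contains a subsequence that is a maximal green sequence" and citing the correct result. You correctly identified this restriction step as the crux and as the one ingredient not supplied by the component preserving machinery; the remaining problem is only that you asserted it in a form that fails and did not supply (or correctly locate) a proof of the version that holds.
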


\begin{proof}
Let $L_i$ be the length of a minimal length maximal green sequence of $Q_i$ for each $1 \leq i \leq \ell$ and let $L = L_1 + L_2 + \cdots L_{\ell}$.
By Corollary~\ref{cor:CPMGS} we know that $\tau$ is a maximal green sequence and will have length $L$.
So, we now need to show that there are no shorter maximal green sequences.
Consider any maximal green sequence $\tau'$ for $Q$.
By~\cite[Theorem 3.3]{GMS} it follows that for each $1 \leq i \leq \ell$ there is a subsequence of mutations in $\tau'$ at vertices in $Q_i$ which is a maximal green sequence of $Q_i$.
This means $\tau'$ must mutate at vertices of $Q_i$ at least $L_i$ times for each $1 \leq i \leq \ell$.
Since $\pi$ is a partition, $Q_i$ and $Q_j$ share no vertices when $i \neq j$.
It follows that $\tau'$ has length at least $L = L_1 + L_2 + \cdots + L_{\ell}$.
\end{proof}

To illustrate a use of Lemma~\ref{lem:min}, let $Q$ be the quiver\footnote{The use of Lemma~\ref{lem:min} readily generalizes to quivers similar to $Q$ with longer cycle or longer path.} in Figure~\ref{fig:min}.
We will take the set partition $\{v_1, v_2, v_3, v_4, v_5\}/\{u_1, u_2, u_3, u_4\}$.
A minimal length maximal green sequence for $Q$ is then
\[(u_1, u_2, u_3, v_1, v_2, v_3, v_4, v_5, v_3, v_2, v_1, u_4)\]
which is a shuffle of $(v_1, v_2, v_3, v_4, v_5, v_3, v_2, v_1)$ and $(u_1, u_2, u_3,u_4)$.
The first is a maximal green sequence for the cycle by~\cite[Lemma 4.2]{Bucher} and is of minimal length by~\cite[Theorem 6.1]{GMS}.
The second is a maximal green sequence coming from an admissible numbering by sources.

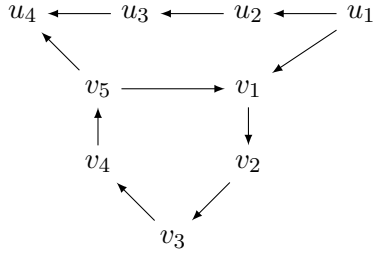
\begin{figure}
\begin{tikzpicture}
\node (1) at (1,0) {$v_1$};
\node (2) at (1,-1) {$v_2$};
\node (3) at (0,-2) {$v_3$};
\node (4) at (-1,-1) {$v_4$};
\node (5) at (-1,0) {$v_5$};
\draw[-{latex}] (1) to (2);
\draw[-{latex}] (2) to (3);
\draw[-{latex}] (3) to (4);
\draw[-{latex}] (4) to (5);
\draw[-{latex}] (5) to (1);

\node (6) at (2.5,1) {$u_1$};
\node (7) at (1,1) {$u_2$};
\node (8) at (-0.5,1) {$u_3$};
\node (9) at (-2,1) {$u_4$};
\draw[-{latex}] (6) to (7);
\draw[-{latex}] (7) to (8);
\draw[-{latex}] (8) to (9);
\draw[-{latex}] (6) to (1);
\draw[-{latex}] (5) to (9);
\end{tikzpicture}
\caption{A quiver where a minimal length maximal green sequence can be found by component preserving mutations.}
\label{fig:min}
\end{figure}

\subsection{Exponentially many maximal green sequences for Dynkin quivers}
In \cite[Remark 4.2 (3)]{BDP} the authors observe that the number of maximal green sequences of the lineary oriented Dynkin quiver of type $A_n$ seems to grow exponentially with $n$.
The main result of this section will affirm this observation.
A \emph{Dynkin quiver of type $A_n$} is any orientation of the Dynkin diagram of type $A_n$.
The \emph{linearly oriented Dynkin quiver of type $A_n$} has vertex set $\{i : 1 \leq i \leq n\}$ and arrow set $\{i \to i+1 : 1 \leq i < n\}$.
Figure~\ref{fig:A5} shows the linearly oriented Dynkin quiver of type $A_5$.
We will show that the number of maximal green sequences of arbitrarily oriented Dynkin quiver of type $A_n$ is at least expontential.
We give a simple and explicit proof of an exponential lower bound to $|\green(Q)|$ where $Q$ is any Dynkin quiver of type $A_n$.
After we will provide an improved bound in the case $Q$ is a linearly oriented Dynkin quiver of type $A_n$.

Recall the Fibonacci numbers are defined by the recurrence $F_1 =1$, $F_2 =2$, and $F_n = F_{n-1} + F_{n-2}$ for $n \geq 2$.
A closed form expression for $F_n$ is
\[F_n = \frac{\phi^n - \psi^n}{\sqrt{5}}\]
where
\begin{align*}
    \phi &= \frac{1 + \sqrt{5}}{2} & \psi &= \frac{1 - \sqrt{5}}{2}.
\end{align*}

\begin{figure}
    \centering
    \begin{tikzpicture}
    \node at (0,0) (1) {$1$};
    \node at (1,0) (2) {$2$};
    \node at (2,0) (3) {$3$};
    \node at (3,0) (4) {$4$};
    \node at (4,0) (5) {$5$};
    \draw[-{latex}] (1) to (2);
    \draw[-{latex}] (2) to (3);
    \draw[-{latex}] (3) to (4);
    \draw[-{latex}] (4) to (5);
    \end{tikzpicture}
    \caption{The linearly oriented Dynkin quiver $A_5$.}
    \label{fig:A5}
\end{figure}

\begin{proposition}
If $Q$ is a Dynkin quiver of type $A_n$ for any $n \geq 1$, then $|\green(Q)| \geq F_{n+1}$.
\label{prop:AnFn}
\end{proposition}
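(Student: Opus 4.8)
The plan is to prove the bound by induction on $n$, the engine being the recursive inequality
$|\green(Q)| \ge |\green(Q')| + |\green(Q'')|$, where $Q'$ is the orientation of $A_{n-1}$ obtained from $Q$ by deleting a chosen leaf and $Q''$ is the orientation of $A_{n-2}$ obtained by deleting that leaf together with its neighbor. Since the Fibonacci numbers satisfy $F_{n+1} = F_n + F_{n-1}$, this inequality combined with the base cases $|\green(A_1)| = 1$ and $|\green(A_2)| = 2$ gives $|\green(Q)| \ge F_{n+1}$ for every orientation $Q$, where the induction hypothesis is taken over \emph{all} orientations of $A_{n-1}$ and $A_{n-2}$, not just a preferred one.

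First I would normalize the orientation at the leaf. Fix the leaf $n$, with unique neighbor $n-1$. Because $n$ has degree one it is a sink in exactly one of $Q$ and $Q^{op}$, and using the standard fact that a quiver and its opposite admit the same number of maximal green sequences, I may replace $Q$ by $Q^{op}$ if necessary and assume the arrow at the leaf is $n-1 \to n$. This normalization is what makes the disjointness invariant below clean.

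Next I would construct two families using the direct sum (component preserving) machinery of Section~\ref{sec:CPdef}. For the first family, take the partition $\{1,\dots,n-1\}/\{n\}$; the unique bridging arrow $n-1 \to n$ makes this a direct sum, so for each $\rho \in \green(Q')$ the concatenation $(\rho, n)$ is a maximal green sequence of $Q$ by Corollary~\ref{cor:CPMGS}, and each such sequence mutates the leaf $n$ exactly once. For the second family, take the partition $\{1,\dots,n-2\}/\{n-1,n\}$; again a single bridging arrow makes this a direct sum, the component on $\{n-1,n\}$ is the quiver $n-1 \to n$ whose long maximal green sequence is $(n,n-1,n)$, and for each $\rho \in \green(Q'')$ the appropriate concatenation (source component first) of $\rho$ with $(n,n-1,n)$ is again a maximal green sequence of $Q$ by Corollary~\ref{cor:CPMGS}; each of these mutates $n$ exactly twice. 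Both assignments $\rho \mapsto (\cdot)$ are manifestly injective, so the first family has $|\green(Q')|$ elements and the second has $|\green(Q'')|$ elements.

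Finally I would note the two families are disjoint, since membership is detected by whether the leaf $n$ is mutated once or twice; this yields $|\green(Q)| \ge |\green(Q')| + |\green(Q'')|$ and closes the induction. I expect the main obstacle to be exactly this disjointness bookkeeping: controlling the number of times the leaf is mutated is what forces the normalization to a sink leaf, because for a \emph{source} leaf the doubled vertex of the length-three $A_2$ sequence would be $n-1$ rather than $n$, and $n-1$ may already be mutated twice inside some $\rho \in \green(Q')$, so the families could collide. The remaining points are routine: the concatenations are component preserving shuffles (immediate from the single bridging arrow in each decomposition), and the base cases $n = 1, 2$ are direct computations matching $F_2$ and $F_3$.
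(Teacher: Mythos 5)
Your proposal is correct and follows essentially the same route as the paper: the same two partitions $\{1,\dots,n-1\}/\{n\}$ and $\{1,\dots,n-2\}/\{n-1,n\}$, the same direct-sum concatenations of maximal green sequences, the same Fibonacci recursion, and disjointness detected at the leaf $n$. The only real difference is packaging: you normalize to a sink leaf by invoking $|\green(Q)|=|\green(Q^{op})|$ (true, but an external fact you would need to cite), whereas the paper avoids this by treating all four orientations of the last two arrows directly and using a slightly finer disjointness invariant --- the number of mutations at $n$, together with whether the unique mutation at $n$ occurs first/last versus in the interior when both families mutate $n$ only once.
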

\begin{proof}
It can be easily checked that $|\green(Q)| = 1 = F_2$ for $n=1$ and $|\green(Q)| = 2 = F_3$ for $n = 2$.
For $n \geq 3$ assume inductively that $|\green(Q)| \geq F_{m+1}$ for all $1 \leq m < n$.
We first consider components of $Q$ coming from the set partition $C/C'$ where $C = \{i : 1 \leq i \leq n-1\}$ and $C' =  \{n\}$.
Here $Q$ is isomorphic to a direct sum of a Dynkin quiver of type $A_{n-1}$ and a Dynkin quiver of type $A_1$.
Hence, $Q$ has at least $|\green(Q|_C)|$ maximal green sequences by considering any maximal green sequence on $Q|_C$ with $(n)$ either appended or prepened depending of whether $(n-1) \to n \in Q$ or $n \to (n-1) \in Q$.

Next consider components of $Q$ coming from the set partition $D/D'$ where $D= \{i : 1 \leq i \leq n-2\}$ and $D' =  \{n-1, n\}$.
Now $Q$ is isomorphic to a direct sum of Dynkin quiver of type $A_{n-2}$ and a Dynkin quiver of type $A_2$.
Thus, $Q$ has at least $|\green(Q|_D)|$ maximal green sequences by considering any maximal green sequence on $D$ with:
\begin{itemize}
    \item $(n,n-1,n)$ appended if $(n-2) \to (n-1), (n-1) \to n \in Q$.
    \item $(n,n-1,n)$ prepended if $(n-1) \to (n-2), (n-1) \to n \in Q$.
    \item $(n-1,n,n-1)$ appended if $(n-2) \to (n-1), n \to (n-1) \in Q$.
    \item $(n-1,n,n-1)$ prepended if $(n-1) \to (n-2), n \to (n-1) \in Q$.
\end{itemize}
We see that the set of maximal green sequences for $Q$ coming from $\green(Q|_C)$ are disjoint from those coming from $\green(Q|_D)$.
In the former $n$ is mutated at only once and is either mutated first or last in the sequence.
In the latter $n$ is either mutated at twice or otherwise is neither the first nor the last mutation.
It follows that
\[|\green(Q)| \geq |\green(Q|_C)| + |\green(Q|_D)| \geq F_{n} + F_{n-1} = F_{n+1}\]
and the proposition is proven.
\end{proof}

For a linearly oriented Dynkin quiver $Q$ of type $A_n$, we have the maximal green sequence
\[(n,n-1, \cdots 1, n, n-1, \dots, 2, \dots, n,n-1,n)\]
which we will call the \emph{long sequence}\footnote{There are many possible maximal green sequences of this maximal length. So, we should perhaps say \emph{a} long sequence instead of \emph{the} long sequence. However, we wish to emphasize that in this section we will be using only this particular sequence of mutations.}.
As an example in the case $n=4$ the long seqeunce is
\[(4,3,2,1,4,3,2,4,3,4).\]
The long sequence is a maximal green sequence coming from a reduced factorization of the longest element in the corresponding Coxeter group.

\begin{proposition}
If $Q$ is the linearly oriented Dynkin quiver of type $A_n$ for any $n \geq 1$, then $|\green(Q)| \geq 2^{n-1}$.
\label{prop:An2n}
\end{proposition}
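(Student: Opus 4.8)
The plan is to prove the stronger quantitative statement that, writing $g_k := |\green(Q)|$ for the linearly oriented Dynkin quiver of type $A_k$ (and setting $g_0 := 1$ for the empty quiver), the counts satisfy the recursion
\[
g_n \ \geq\ \sum_{j=0}^{n-1} g_j .
\]
From this the proposition follows by a one-line induction: the base case is $g_1 \geq g_0 = 1 = 2^0$, and if $g_j \geq 2^{j-1}$ for all $1 \leq j \leq n-1$ then
\[
g_n \ \geq\ g_0 + \sum_{j=1}^{n-1} 2^{j-1} \ =\ 1 + (2^{n-1}-1) \ =\ 2^{n-1}.
\]

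To establish the recursion I would, for each $1 \leq k \leq n$, peel off the last $k$ vertices using the set partition $\pi = \{1, \dots, n-k\}\,/\,\{n-k+1, \dots, n\}$. Since every arrow of $Q$ has the form $i \to i+1$, the only arrow crossing $\pi$ is $(n-k)\to(n-k+1)$, so $Q$ is a direct sum as in the direct-sum example, with summand $A$ the linearly oriented $A_{n-k}$ on $\{1,\dots,n-k\}$ and summand $B$ the linearly oriented $A_k$ on $\{n-k+1,\dots,n\}$. I take for the $B$-block its long sequence $L_k$ (the maximal length maximal green sequence described above, written in the vertices of $B$), and for the $A$-block an arbitrary $\rho \in \green(A)$. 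The direct-sum construction, which is a special case of Corollary~\ref{cor:CPMGS}, shows that each concatenation $(\rho, L_k)$ is a maximal green sequence of $Q$. This yields a family $\mathcal{F}_k$ of size $|\mathcal{F}_k| = g_{n-k}$, with distinct $\rho$ giving distinct sequences because $\rho$ is recovered by restricting $(\rho, L_k)$ to $\{1,\dots,n-k\}$.

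The mechanism that lets the families combine additively is a disjointness invariant: the number of times the global sink $n$ is mutated. In $L_k$ the sink of the $B$-block is exactly the global sink $n$, and it occurs precisely once at the head of each of the $k$ decreasing runs $(k,k-1,\dots,1)(k,k-1,\dots,2)\cdots(k)$, hence exactly $k$ times; moreover $\rho$ never mutates $n$. So every sequence in $\mathcal{F}_k$ mutates vertex $n$ exactly $k$ times, and therefore $\mathcal{F}_1,\dots,\mathcal{F}_n$ are pairwise disjoint. Summing their sizes gives $g_n \geq \sum_{k=1}^{n} g_{n-k} = \sum_{j=0}^{n-1} g_j$, where the extreme term $k=n$ uses $g_0 = 1$ and contributes the full long sequence $L_n$.

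The only nonroutine input is the description of the long sequence: I must confirm that $L_k$ really is a maximal green sequence of the linearly oriented $A_k$ block and that it mutates that block's sink exactly $k$ times. Both facts are read directly off the explicit run structure recorded before the statement, so I expect this to be the one point to pin down with care; the remaining ingredients are exactly the direct-sum case of Corollary~\ref{cor:CPMGS} and the bookkeeping of the sink-mutation count that forces disjointness.
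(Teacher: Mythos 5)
Your proposal is correct and follows essentially the same route as the paper's proof: the paper also splits the linearly oriented $A_n$ quiver into a head block $\{1,\dots,k\}$ and a tail block $\{k+1,\dots,n\}$, appends the long sequence of the tail block to an arbitrary maximal green sequence of the head block, and uses the number of mutations at vertex $n$ (which is $n-k$ in the paper's indexing, your $k$) to see that the resulting families are pairwise disjoint, giving the bound $1+\sum_{k=1}^{n-1}2^{k-1}=2^{n-1}$. The only differences are the indexing convention and your packaging of the count as the recursion $g_n\geq\sum_{j=0}^{n-1}g_j$, which is cosmetic.
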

\begin{proof}
For $n=1$ we have $|\green(Q)| = 1$ and for $n=2$ and $|\green(Q)| = 2$.
Given $n \geq 3$, assume inductively that $|\green(Q)| \geq 2^{m-1}$ for all $1 \leq m < n$.
Consider components from the set partition $C^{(k)}/D^{(k)}$ where $C^{(k)} = \{1,2,\dots k\}$ and $D^{(k)} = \{k+1,k+2, \dots, n\}$ for $0 \leq k < n$.
For each $k$, our quiver $Q$ has at least $|\green(Q|_{C^{(k)}})|$ many maximal green sequences by appending the long sequence of $Q|_{D^{(k)}}$ to any maximal green sequence of $Q|_{C^{(k)}}$.
Here we count one maximal green sequence, the long sequence for $Q$, when $k=0$.
In the long sequence for $Q|_{D^{(k)}}$ vertex $n$ is mutated at $n-k$ times, and thus the maximal green sequences coming from $\green(Q|_{C^{(k_1)}})$ and $\green(Q|_{C^{(k_2)}})$ are disjoint for $k_1 \neq k_2$.
So,
\[|\green(Q)| \geq \sum_{k=0}^{n-1}|\green(Q|_{C^{(k)}})| \geq 1 + \sum_{k=1}^{n-1} 2^{k-1} = 2^{n-1}\]
and the proposition follows.
\end{proof}

Let $\green(A_n)$ denote the set of maximal green sequences of a linearly oriented type $A_n$ quiver.
Proposition~\ref{prop:An2n} is constructive starting from knowing $\green(A_1) = \{(1)\}$ and $\green(A_2) = \{(1,2), (2,1,2)\}$.
The method in the proof of Proposition~\ref{prop:An2n} produces 
\[\{(1,2,3),(2,1,2,3), (1,3,2,3),(3,2,1,3,2,3)\} \subseteq \green(A_3),\]
and we show in Table~\ref{tab:A4} the $8$ maximal green sequences in $\green(A_4)$ constructed by applying the proof of Proposition~\ref{prop:An2n} one more time.
The maximal green sequences in Table~\ref{tab:A4} are arranged according to the set partition $C^{(k)}/D^{(k)}$.
\begin{table}[]
    \centering
    \[
    \begin{array}{|c|c|} \hline
         k& \text{Maximal green sequences}  \\ \hline
         0& (4,3,2,1,4,3,2,4,3,4)\\ \hline
         1& (1,4,3,2,4,3,4)\\ \hline
         2& (1,2,4,3,4), (2,1,2,4,3,4)\\ \hline
         3& (1,2,3,4),(2,1,2,3,4),(1,3,2,3,4),(3,2,1,3,2,3,4) \\ \hline
    \end{array}
    \]
    \caption{Maximal green sequences in $\green(A_4)$ constructed in proof of Proposition~\ref{prop:An2n} according to set partition $C^{(k)}/D^{(k)}$.}
    \label{tab:A4}
\end{table}

\bibliographystyle{alpha}
\bibliography{refs}

\newcommand{\etalchar}[1]{$^{#1}$}
\begin{thebibliography}{GHKK18}

\bibitem[ACC{\etalchar{+}}14]{alim2014}
Murad Alim, Sergio Cecotti, Clay Córdova, Sam Espahbodi, Ashwin Rastogi, and
  Cumrun Vafa.
\newblock $\mathcal{N}=2$ quantum field theories and their {BPS} quivers.
\newblock {\em Adv. Theor. Math. Phys.}, 18(1):27--127, 01 2014.

\bibitem[Ami09]{Amiot}
Claire Amiot.
\newblock Cluster categories for algebras of global dimension 2 and quivers
  with potential.
\newblock {\em Annales de l'Institut Fourier}, 59(6):2525--2590, 2009.

\bibitem[BDP14]{BDP}
Thomas Br\"ustle, Gr\'egoire Dupont, and Matthieu P\'erotin.
\newblock On maximal green sequences.
\newblock {\em Int. Math. Res. Not. IMRN}, (16):4547--4586, 2014.

\bibitem[BM]{BanffRed}
Eric Bucher and John Machacek.
\newblock Reddening sequences for banff quivers and the class $\mathcal{P}$.
\newblock arXiv:1807.03359 [math.AC].

\bibitem[BM18]{BucherMills}
Eric Bucher and Matthew~R. Mills.
\newblock Maximal green sequences for cluster algebras associated with the
  n-torus with arbitrary punctures.
\newblock {\em J. Algebraic Comb.}, 47(3):345--356, 2018.

\bibitem[BMS]{BMS}
Eric Bucher, John Machacek, and Michael Shapiro.
\newblock Upper cluster algebras and choice of ground ring.
\newblock arXiv:1802.04835 [math.AC].

\bibitem[Buc16]{Bucher}
Eric Bucher.
\newblock Maximal green sequences for cluster algebras associated to orientable
  surfaces with empty boundary.
\newblock {\em Arnold Math. J.}, 2(4):487--510, 2016.

\bibitem[CDR{\etalchar{+}}16]{CDRS}
E.~Cormier, P.~Dillery, J.~Resh, K.~Serhiyenko, and J.~Whelan.
\newblock Minimal length maximal green sequences and triangulations of
  polygons.
\newblock {\em J. Algebraic Combin.}, 44(4):905--930, 2016.

\bibitem[CDZ11]{Cecotti2011}
Sergio Cecotti and Michele Del~Zotto.
\newblock On arnold's 14 `exceptional' {$\mathcal{N} = 2$} superconformal gauge
  theories.
\newblock {\em Journal of High Energy Physics}, 2011.

\bibitem[CL]{CL}
Peigen Cao and Fang Li.
\newblock Uniformly column sign-coherence and the existence of maximal green
  sequences.
\newblock arXiv:1712.00973 [math.RT].

\bibitem[CZ14]{PhysicsA}
Sergio Cecotti and Michele~Del Zotto.
\newblock {Y}-systems, {Q}-systems, and {4D} {$\mathcal {N}=2$} supersymmetric
  {QFT}.
\newblock {\em Journal of Physics A: Mathematical and Theoretical}, 47(47),
  2014.

\bibitem[DFK09]{Q2}
Philippe Di~Francesco and Rinat Kedem.
\newblock Q-systems as cluster algebras ii: Cartan matrix of finite type and
  the polynomial property.
\newblock {\em Letters in Mathematical Physics}, 89(3), Sep 2009.

\bibitem[DWZ10]{DWZ}
Harm Derksen, Jerzy Weyman, and Andrei Zelevinsky.
\newblock Quivers with potentials and their representations {II}: applications
  to cluster algebras.
\newblock {\em J. Amer. Math. Soc.}, 23(3):749--790, 2010.

\bibitem[DZ11]{DelZotto2011}
Michele Del~Zotto.
\newblock More arnold's {$\mathcal{N} = 2$} superconformal gauge theories.
\newblock {\em Journal of High Energy Physics}, 2011.

\bibitem[FS18]{FS}
Nicolas Ford and Khrystyna Serhiyenko.
\newblock Green-to-red sequences for positroids.
\newblock {\em Journal of Combinatorial Theory, Series A}, 159:164 -- 182,
  2018.

\bibitem[FZ02]{FZ}
Sergey Fomin and Andrei Zelevinsky.
\newblock Cluster algebras. {I}. {F}oundations.
\newblock {\em J. Amer. Math. Soc.}, 15(2):497--529, 2002.

\bibitem[FZ03]{FZYsystem}
Sergey Fomin and Andrei Zelevinsky.
\newblock {$Y$}-systems and generalized associahedra.
\newblock {\em Ann. of Math. (2)}, 158(3):977--1018, 2003.

\bibitem[GHKK18]{GHKK}
Mark Gross, Paul Hacking, Sean Keel, and Maxim Kontsevich.
\newblock Canonical bases for cluster algebras.
\newblock {\em J. Amer. Math. Soc.}, 31(2):497--608, 2018.

\bibitem[GM17]{GM}
Alexander Garver and Gregg Musiker.
\newblock On maximal green sequences for type {$\Bbb A$} quivers.
\newblock {\em J. Algebraic Combin.}, 45(2):553--599, 2017.

\bibitem[GMS18]{GMS}
Alexander Garver, Thomas McConville, and Khrystyna Serhiyenko.
\newblock Minimal length maximal green sequences.
\newblock {\em Adv. in Appl. Math.}, 96:76--138, 2018.

\bibitem[GPa]{GP3}
Pavel Galashin and Pavlo Pylyavskyy.
\newblock Quivers with additive labelings: classification and algebraic
  entropy.
\newblock arXiv:1704.05024 [math.CO].

\bibitem[GPb]{GP2}
Pavel Galashin and Pavlo Pylyavskyy.
\newblock Quivers with subadditive labelings: classification and integrability.
\newblock arXiv:1606.04878 [math.CO].

\bibitem[GP19]{GP1}
Pavel Galashin and Pavlo Pylyavskyy.
\newblock The classification of {Z}amolodchikov periodic quivers.
\newblock {\em Amer. J. Math}, 141(2), 2019.

\bibitem[GSV14]{GSV-CGshort}
Michael Gekhtman, Michael Shapiro, and Alek Vainshtein.
\newblock Cremmer-{G}ervais cluster structure on {$SL_n$}.
\newblock {\em Proc. Natl. Acad. Sci. USA}, 111(27):9688--9695, 2014.

\bibitem[GSV17]{GSV-CGlong}
M.~Gekhtman, M.~Shapiro, and A.~Vainshtein.
\newblock Exotic cluster structures on {$SL_n$}: the {C}remmer-{G}ervais case.
\newblock {\em Mem. Amer. Math. Soc.}, 246(1165):v+94, 2017.

\bibitem[HLY18]{seedHom}
Min Huang, Fang Li, and Yichao Yang.
\newblock On structure of cluster algebras of geometric type i: In view of
  sub-seeds and seed homomorphisms.
\newblock {\em Science China Mathematics}, 61(5):831--854, May 2018.

\bibitem[Ked08]{Q1}
Rinat Kedem.
\newblock Q-systems as cluster algebras.
\newblock {\em Journal of Physics A: Mathematical and Theoretical},
  41(19):194011, 2008.

\bibitem[Kel11]{KellerQdilog}
Bernhard Keller.
\newblock On cluster theory and quantum dilogarithm identities.
\newblock In {\em Representations of algebras and related topics}, EMS Ser.
  Congr. Rep., pages 85--116. Eur. Math. Soc., Z\"urich, 2011.

\bibitem[Kel12]{KellerSquare}
Bernhard Keller.
\newblock Cluster algebras and derived categories.
\newblock In {\em Derived categories in algebraic geometry}, EMS Ser. Congr.
  Rep., pages 123--183. Eur. Math. Soc., Z\"urich, 2012.

\bibitem[Kel13]{KellerPeriod}
Bernhard Keller.
\newblock The periodicity conjecture for pairs of {D}ynkin diagrams.
\newblock {\em Ann. of Math. (2)}, 177(1):111--170, 2013.

\bibitem[KS]{KS}
Maxim Kontsevich and Yan Soibelman.
\newblock Stability structures, motivic donaldson-thomas invariants and cluster
  transformations.
\newblock arXiv:0811.2435 [math.AG].

\bibitem[LM18]{LawsonMills}
John~W. Lawson and Matthew~R. Mills.
\newblock Properties of minimal mutation-infinite quivers.
\newblock {\em J. Combin. Theory Ser. A}, 155:122--156, 2018.

\bibitem[Mil]{Mills}
Matthew~R. Mills.
\newblock On the relationship between green-to-red sequences, local-acyclicity,
  and upper cluster algebras.
\newblock arXiv:1804.00479v3 [math.AC].

\bibitem[Mil17]{MillsMGS}
Matthew~R. Mills.
\newblock Maximal green sequences for quivers of finite mutation type.
\newblock {\em Adv. Math.}, 319:182--210, 2017.

\bibitem[Mul16]{MullerEJC}
Greg Muller.
\newblock The existence of a maximal green sequence is not invariant under
  quiver mutation.
\newblock {\em Electron. J. Combin.}, 23(2):Paper 2.47, 23, 2016.

\bibitem[Qiu15]{Qiu}
Yu~Qiu.
\newblock Stability conditions and quantum dilogarithm identities for {D}ynkin
  quivers.
\newblock {\em Adv. Math.}, 269:220--264, 2015.

\bibitem[Rea]{Reading}
Nathan Reading.
\newblock Dominance phenomena: mutation, scattering and cluster algebras.
\newblock arXiv:1802.10107 [math.CO].

\bibitem[Rea18]{scattering}
Nathan Reading.
\newblock Scattering fans.
\newblock {\em International Mathematics Research Notices}, page rny260, 2018.

\bibitem[Rei10]{Rei}
Markus Reineke.
\newblock Poisson automorphisms and quiver moduli.
\newblock {\em J. Inst. Math. Jussieu}, 9(3):653--667, 2010.

\bibitem[Sev14]{Seven}
Ahmet~I. Seven.
\newblock Maximal green sequences of exceptional finite mutation type quivers.
\newblock {\em SIGMA Symmetry Integrability Geom. Methods Appl.}, 10, 2014.
\newblock 089, 5 pages, arXiv:1406.1072.

\bibitem[Ste10]{Stem}
John~R. Stembridge.
\newblock Admissible {$W$}-graphs and commuting {C}artan matrices.
\newblock {\em Adv. in Appl. Math.}, 44(3):203--224, 2010.

\bibitem[Zam91]{Zam}
Al.~B. Zamolodchikov.
\newblock On the thermodynamic {B}ethe ansatz equations for reflectionless
  {$ADE$} scattering theories.
\newblock {\em Phys. Lett. B}, 253(3-4):391--394, 1991.

\end{thebibliography}

\end{document}